\documentclass{elsarticle}
\bibliographystyle{elsarticle-num}

\usepackage{amsmath,amssymb,amsfonts,amsthm}

\usepackage{tikz,tikz-3dplot}

\usepackage{enumerate,verbatim}
\usepackage{xcolor}
\usepackage{hyperref}
\definecolor{myBlue}{rgb}{0.0,0.0,0.55}
\hypersetup{
	plainpages=false,
    colorlinks,
    linktocpage=true,   
    linkcolor={red!50!black},
    citecolor={blue!50!black},
    urlcolor={blue!80!black}
} 

\usepackage{comment,multicol,xspace}

\newcounter{mnote}
  \setcounter{mnote}{0}
  
  \let\oldmarginpar\marginpar
    \renewcommand\marginpar[1]{\-\oldmarginpar[\raggedleft\footnotesize #1]%
    {\raggedright\footnotesize #1}}

\newtheorem{theorem}{Theorem}[section]
\newtheorem{lemma}[theorem]{Lemma}
\newtheorem{corollary}[theorem]{Corollary}
\newtheorem{definition}[theorem]{Definition}

\newtheorem{assumption}[theorem]{Assumption}


\makeatletter
\newcommand{\tnorm}{\@ifstar\@tnorms\@tnorm}
\newcommand{\@tnorms}[1]{%
\left|\mkern-2.5mu\left|\mkern-2.5mu\left|
#1
\right|\mkern-2.5mu\right|\mkern-2.5mu\right|
}
\newcommand{\@tnorm}[2][]{%
\mathopen{#1|\mkern-2.5mu#1|\mkern-2.5mu#1|}
#2
\mathclose{#1|\mkern-2.5mu#1|\mkern-2.5mu#1|}
}
\makeatother

\numberwithin{theorem}{section}
\numberwithin{equation}{section}

\begin{document}

\begin{frontmatter}

\title{A note on the error estimate of the virtual element methods}

\author[UCIaddress]{Shuhao Cao\corref{mycorrespondingauthor}}
\cortext[mycorrespondingauthor]{Corresponding author}
\ead{scao@math.uci.edu}

\author[UCIaddress]{Long Chen}
\ead{chenlong@math.uci.edu}

\author[UCIaddress]{Frank Lin}
\ead{fmlin@uci.edu}

\address[UCIaddress]{Department of Mathematics, University of California Irvine, 
Irvine, CA 92697}

\begin{abstract}
This short note reports a new derivation of the optimal order of the a priori error 
estimates for conforming virtual element methods (VEM) on 3D polyhedral meshes based 
on an error equation. The geometric assumptions, which are necessary for the optimal 
order of the conforming VEM error estimate in the $H^1$-seminorm, are relaxed for 
that in a bilinear form-induced energy norm. 
\end{abstract}

\begin{keyword}
Virtual elements \sep polytopal finite elements \sep polyhedral meshes \sep Poisson 
problem
\MSC[2010] 65N30 \sep 65N12 \sep 65N15
\end{keyword}

\end{frontmatter}


\section{Introduction}
\label{sec:intro}
Virtual element method (VEM) (e.g., 
\cite{Beirao-da-Veiga;Brezzi;Cangiani;Manzini:2013principles,Beirao-da-Veiga;Brezzi;Marini;Russo:2014hitchhikers,Beirao-da-Veiga;Dassi;Russo:2017High-order})
 can be viewed as a 
universalization of the finite element method (FEM) on simplicial and cubical 
elements to any polytopal elements, enabling a much greater flexibility in mesh 
generations. All the relevant integral quantities (e.g., 
stiffness matrix) can be computed or approximated from the degrees of freedom in the 
VEM space without explicitly constructing the non-polynomial basis functions.

The aim of this paper is to present the optimal order of error estimates of VEM with 
relaxed geometric assumptions on a three dimensional mesh. Consider the following 
weak formulation for a toy model Poisson 
equation with zero Dirichlet boundary condition in a three dimensional polyhedral 
domain $\Omega$: given an $f\in L^2(\Omega)$, find $u\in H_0^1(\Omega)$ such 
that
\begin{equation}\label{eq:weakform}
a(u,v) :=(\nabla u, \nabla v) = (f, v)\quad \forall v\in H_0^1(\Omega),
\end{equation}
where $(\cdot,\cdot)$ is the inner product on $L^2(\Omega)$.

Then $a(\cdot,\cdot)$ is approximated by the following bilinear form 
$a_h(\cdot,\cdot)$ on the VEM space $V_h$ \eqref{eq:space-global} built upon a 
polyhedral partition $\mathcal{T}_h$ of $\Omega$. $a_h(\cdot,\cdot)$ is the 
summation of local bilinear forms on each element that contains an orthogonal 
$H^1$-projection term $\big(\nabla \Pi_K (\cdot),\nabla \Pi_K (\cdot)\big)$ and a 
stabilization term $S_K(\cdot, \cdot)$ to ensure the 
coercivity. The projection term can be computed exactly 
through the local degrees of freedom for functions of $u,v\in V_h$:
\begin{equation}
a_h(u,v)=\;\sum_{K\in \mathcal{T}_h} 
\Bigl[\big(\nabla \Pi_K u,\nabla \Pi_K v\big)_K+S_K(u,v)\Bigr],
\end{equation}
where $\Pi_K$ is the $H^1$-projection operator (see Definition \ref{def:proj-H1}) to 
the space of degree $k$ polynomials on $K$.

In the traditional VEM approaches, the stabilization term is $S_K(\cdot, \cdot)$ 
constructed to satisfy $k$-consistency, as well as the following norm equivalence 
that holds between the exact $H^1$-inner product $a(\cdot,\cdot)$ and the 
approximated form $a_h(\cdot,\cdot)$ on the VEM space,
\begin{equation}\label{eq:normequiv}
a(u,u) \lesssim a_h(u,u)\lesssim a(u,u), \quad \text{for }\; u\in V_h,
\end{equation}
in which both constants in the inequalities are independent of $u$, but are 
dependent of the geometry of the meshes. With this 
property the finite dimensional approximation problem to the weak formulation 
(\eqref{eq:VEM 
problem} in Section \ref{sec:pre}) using the VEM discretization 
is well-posed, and it can be proved that the error estimate under the $H^1$-seminorm 
is optimal (e.g., see 
\cite{Beirao-da-Veiga;Brezzi;Cangiani;Manzini:2013principles}). One 
possible choice of the local stabilization on $K$ is given in 
\cite{Beirao-da-Veiga;Brezzi;Cangiani;Manzini:2013principles}: 
\begin{equation}
\label{eq:stab-orig}
S^{\text{orig}}_K(u,v)=\sum_{r=1}^{N_K} \chi_r(u-\Pi_K u)\chi_r(v-\Pi_K v),
\quad \text{ for }\;u,v\in V_h,
\end{equation} 
where $N_K$ is the number of degrees of freedom on an element $K$ (see Definition 
\ref{DOF}), $\chi_r$ is each individual degree of freedom.

Under certain geometric assumptions of the polytopal mesh, the aforementioned norm 
equivalence \eqref{eq:normequiv} is established with a 
proper choice of the stabilization (e.g., \eqref{eq:stab-orig}). 
Typical geometric assumptions include that (1) the mesh is star-shaped with 
chunkiness parameter uniformly bounded below (uniform star shape; see Definition 
\ref{def:star-shape}), and (2) the 
distance 
between neighboring vertices are comparable to the diameter of the element (no short 
edges or small faces). However, it has been observed 
in numerical experiments that the optimal rates of convergence for the virtual 
element methods can be achieved extraordinarily with relatively little to none 
geometric constraints on meshes (see 
e.g.,~\cite{Berrone;Borio:2017Orthogonal,Benedetto;Berrone;Pieraccini;Scialo:2014virtual,Beirao-da-Veiga;Dassi;Russo:2017High-order,Chen;Wei;Wen:2017interface-fitted}.)

In \cite{Beirao-da-Veiga;Lovadina;Russo:2017Stability}, different choices of 
stabilization terms are analyzed in detail, and the geometric 
assumptions are further relaxed for two dimensional mesh by 
including the case where elements contain short edges. Recently, in \cite{Brenner;Sung:2018Virtual}, it is shown 
that one can allow small faces on a three dimensional mesh and still achieve 
the optimal order with only the assumption that the elements are uniform star shape. 
However, several error estimates still require faces to be uniform star shape and 
some error estimates depend on the logarithm of the longest to the shortest edge 
ratio of the faces. This rules out anisotropic faces with unbounded aspect ratio.

To partially explain the robustness of VEM with respect to shapes, we shall use a 
different approach, which was first proposed in 
\cite{Cao;Chen:2018Anisotropic} to handle the 2D cut mesh and in 
\cite{Cao;Chen:2018AnisotropicNC} for nonconforming VEM, to relax the geometric 
assumptions further on three dimensional meshes, and still achieve the optimal 
order error estimates. Instead of working on the stronger $H^1$-seminorm, the error 
analysis is performed toward a weaker ``energy norm'' 
$\tnorm{\cdot}:=a_h^{1/2}(\cdot,\cdot)$. 
Similar to that of the Discontinuous Galerkin (DG)-type methods, an error equation 
for $\tnorm{\cdot}$, is derived. 
This error equation breaks down the $\tnorm{u_h-u_I}$ 
into several standard projection and interpolation error estimates. Our 
method does not rely on the norm equivalence property of the stabilization term. 
Instead, different from the above so-called identity matrix stabilization 
\eqref{eq:stab-orig}, the stabilization term is concocted from the ``boundary 
term'' emerged from the integration by parts (see Section \ref{sec:erroreq} for 
detail), while equipped with correct weights to remain the optimal order for the 
error estimates.

The following new 
stabilization term is proposed in this paper, which is partly inspired by the 
conjecture in \cite{Brenner;Sung:2018Virtual} and is ``singularly conforming'' in 
the sense that the term which keeps the conformity of the method may have a small 
constant in front it.
\begin{equation}
\begin{aligned}
S_K(u,v)= &\;h_K^{-1}\;\sum_{F\subset \partial K}\Bigl[
\big(Q_K u-Q_F u,Q_K v-Q_F v\big)_{F}+
\\
& \epsilon_F h_F\sum_{e\subset \partial F} \big( u-Q_F u, v-Q_F v\big)_e\Bigr],
\end{aligned}
\end{equation}
where $Q_K$, $Q_F$ are $L^2$-projection operators (see Definition \ref{def:proj-L2}) 
to the local spaces of degree $k$ polynomials on $K$ and $F$, respectively. The 
$\epsilon_F$ 
is related to the chunkiness parameter of $\rho_F$ (see Definition 
\ref{def:star-shape}) of each polygonal face $F$ 
on the boundary of a polyhedral element $K$. 

This new approach, comparing to the 
existing results in 
\cite{Beirao-da-Veiga;Lovadina;Russo:2017Stability,Brenner;Sung:2018Virtual}, allows 
us to deal with the 
mesh that has less constraints on the shape regularity. For example, the chunkiness 
parameter $\rho_F$ 
of each face $F$ on an element $K$ may no longer be uniformly bounded below, i.e., 
the constants in the new estimates do not depend on the logarithm of the 
longest and the shortest edge of each 
face. As a result, we obtain the optimal order error 
estimate on a weaker energy norm \eqref{Triple estimate} with a set of relaxed 
geometric assumptions \ref{assumptions:geometry} that are 
introduced in Section \ref{sec:estimation}. 

The rest of the paper is organized as follows. We begin with a brief review of VEM 
definitions. Then we go through some error estimate lemmas from the past references 
(e.g., \cite{Brenner;Sung:2018Virtual}). For each inequality with constant we will 
put extra emphasis on whether the hidden constant depends on the chunkiness parameter
of the domain or not. After that, our main result, the error equation will be 
derived, 
and then based on this error equation, the optimal order of the a priori error 
estimate under appropriate geometric assumptions can be achieved.

\section{Preliminaries}
\label{sec:pre}

In this section, we introduce the definition of the VEM space, its modified variant, 
and the corresponding degrees of freedom. 
Throughout the paper, without explicitly define them, we will use standard notations 
for differential operators, function spaces and norms that can be found, for 
examples in \cite{adams2003sobolev}.

The domain $\Omega$ is partitioned into a three dimensional mesh $\mathcal{T}_h$, 
and for simplicity $\Omega$ is assumed to have a polyhedral boundary so that there 
is no geometric error of $\mathcal{T}_h$ on $\partial \Omega$. Let $K$ be a simple 
polyhedral element in $\mathcal{T}_h$. $F$ denotes 
a face of the element, and $e$ denotes an edge of a face. 
$D$ denotes a general domain in two or three dimensions, and $h_D$ is the 
diameter of $D$.

\subsection{VEM spaces}
To define the three dimensional VEM space, first we need to define the two 
dimensional local VEM space $V_k$ 
(\cite{Beirao-da-Veiga;Brezzi;Cangiani;Manzini:2013principles}) and the 
modified space $W_k$ (\cite{Ahmad;Alsaedi;Brezzi;Marini:2013Equivalent}). Notice 
when defining the local VEM space on a face, the surface Laplacian operator 
$\Delta_F$ on a face $F$ shall be used. Let $k\in \mathbb{N}$ and let 
$\mathbb{P}_k(D)$ be the space of polynomial functions with degree up to $k$ (where 
$\mathbb{P}_{-1}$ contains only zero polynomial.) on $D$.
\begin{definition} [Local two dimensional VEM space on a face $F$]
\label{def:space-vem-2d}
\begin{equation}
V_k(F):= \big\{v\in H^1(F): \Delta_{F} v \in \mathbb{P}_{k-2}(F), 
v|_{\partial F}\in B_k(\partial F) \big\},
\end{equation}
where
\begin{equation}
B_k(\partial F) := \{ v\in C^0(\partial F): v|_e\in \mathbb{P}_k(e) 
\text{ for all } e\subset \partial F \}.
\end{equation}
\end{definition}

The degrees of freedom of the space in Definition \ref{def:space-vem-2d} can be 
defined using 
the scaled monomials.
Let $D$ be a two dimensional simple polygon or three dimensional simple polygonal 
domain, 
and $(x_c, y_c, z_c)$ be the center of mass
of $D$. Then the scaled monomials are 
polynomials of the form 
$m_{\alpha}=(\frac{x-x_c}{h_D})^{\alpha_1}(\frac{y-y_c}{h_D})^{\alpha_2}(\frac{z-z_c}{h_D})^{\alpha_3}$
 where 
$\alpha_1$, $\alpha_2$, $\alpha_3$ are non-negative integers. We define the degree 
to be 
$\alpha=\alpha_1+\alpha_2+\alpha_3$.

\begin{definition} [Degrees of freedom on a face]
\label{DOF}
The degrees of freedom of $v_h$ in $V_k(F)$ are defined as follows:
\begin{enumerate}
\item The value of $v_h$ at the vertices of $F$.
\item The moments up to order $k-2$ of $v_h$ in each edge $e$. That is, 
$\displaystyle \frac{1}{|e|}\int_e{v_h}m_{\alpha}$ where $m_{\alpha}$ is a scaled 
monomial for $\alpha\leq k-2$.
\item the moments up to order $k-2$ of $v_h$ in $F$. That is, $\displaystyle 
\frac{1}{|F|}\int_F{v_h}m_{\alpha}$ where $m_{\alpha}$ is a scaled monomial for 
$\alpha\leq k-2$.
\end{enumerate}
\end{definition}



The following projection operator $\Pi_K$ in 
the gradient inner product can be defined for $H^1(D)$ functions in 2D or 3D, but 
only can be computed with the degrees of freedom above in 2D.

\begin{definition} [Gradient orthogonal projection operator]
\label{def:proj-H1}
$\Pi_D^k : H^1(D)\rightarrow \mathbb{P}_k(D)$, $v\mapsto \Pi_D^kv$ satisfies 
\[
\big(\nabla (\Pi_D^k v-v), \nabla p\big)_{D}=0, \quad \forall p\in 
\mathbb{P}_k(D).
\]
where the constant kernel is determined by the following constraint:
\begin{equation}\int_{D}(\Pi_D^k v-v)=0,\quad k\geq 2,
\end{equation} and
\begin{equation}\int_{\partial D}(\Pi_D^k v-v)=0,\quad k=1.
\end{equation}

\end{definition}

On a polygonal domain $D$, to compute the gradient projection of $v_h\in V_k (D)$
to $\mathbb{P}_k(D)$, it is 
sufficient to compute $(\nabla v_h, \nabla q)_D$ for all $q\in \mathbb{P}_k(D)$. 
Integration by parts yields
\begin{equation}
\label{eq:dof-int}
(\nabla v_h, \nabla  q)_D=-(v_h, \Delta q)_D+(v_h, \nabla q\cdot n)_{\partial D},
\end{equation}
then the first term of the right hand side can be computed via internal moments of  
$v_h$ in $D$ (See definition \ref{DOF}), and the second 
term can be computed because it is a polynomial integral (See definition 
\ref{def:space-vem-2d}). 

However, for a three dimensional polyhedron $D$, a naive generalization of 
the degrees of freedom for the local space $V_k(D)$ mimicking what of the polygonal 
version in \ref{DOF} is not sensible. In the three dimensional case, part of the 
second term $(v_h, \nabla q\cdot n)_{F}$ in \eqref{eq:dof-int} is a surface 
moment integral on $F$ that is not computable if $F$ is not triangular. The reason 
is that only the moments of $v_h$ 
on a face $F\subset \partial D$ up to degree $k-2$ are given as degrees of freedom 
(See definition \ref{DOF}), yet for $q\in 
\mathbb{P}_k(D)$, $\nabla q\cdot n|_F \in \mathbb{P}_{k-1}(F)$. To compute this, we 
need to be able to compute the $L^2$-projection onto $\mathbb{P}_{k-1}(F)$ for a VEM 
function $v_h$. To this end, modified face spaces such as $W_k(F)$ or 
$\Tilde{W}_k(F)$ are to be introduced.

\begin{definition} [$L^2$ orthogonal projection operator]
\label{def:proj-L2}
$Q_D^k: L^2(D)\rightarrow \mathbb{P}_k(D)$, $v\mapsto Q_D^kv $ satisfies
\[
\big(Q_D^k v-v, q\big)_{D}=0, \quad \forall q\in \mathbb{P}_k(D).
\]
\end{definition}

When $D$ is a polygonal face $F$ on the boundary of a polyhedron $K$, the above 
$L^2$ projection is not computable through the internal moment degrees of freedom 
for $V_k(F)$ in \ref{DOF}, in that the moments $(v_h, q)_D$ for polynomial $q$ being 
degree $k$ or $k-1$ are unknown. 
However the space $V_k(F)$ defined above can be enriched in a certain way 
(\cite{Ahmad;Alsaedi;Brezzi;Marini:2013Equivalent, Brenner;Sung:2018Virtual}, see 
definition \ref{WK} and 
\ref{QK}) such that the $L^2$-projection is computable from the same set of degrees 
of freedom with \ref{DOF}. These are the motivations behind defining the modified 
space such as $W_k(F)$ and $\Tilde{W}_k(F)$, instead of using a direct 
generalization from $V_k(F)$ to $V_k(D)$ for a polyhedron $D$.

When the order of the projection operators are omitted, we assume it is $k$, the 
same as the order of the VEM space.

\begin{definition} [Local modified VEM space 
\cite{Ahmad;Alsaedi;Brezzi;Marini:2013Equivalent}]
\label{WK}
Let 
$\widetilde{\mathbb{P}}_k(e)$ be 
the space of degree exactly $k$ monomials, then the local modified VEM space can 
be defined as:
\begin{equation}
\label{eq:space-WK}
\begin{aligned}
W_k(F):= \big\{v\in H^1(F): \;\; & \Delta_F v \in \mathbb{P}_{k}(F), 
v|_{\partial F}\in B_k(\partial F), 
\\
& (v,q)_{F}=(\Pi_F^k v,q)_{F}, \; \forall q\in \widetilde{\mathbb{P}}_k(F)\cup 
\widetilde{\mathbb{P}}_{k-1}(F)\big\}.
\end{aligned}
\end{equation}
\end{definition}

Note that $W_k$ and $V_k$ share the same degrees of freedom 
(\cite{Ahmad;Alsaedi;Brezzi;Marini:2013Equivalent}), but the $L^2$-projection of a 
function in $W_k$ is now computable.  In $W_k$ we 
can replace $(v_h, q)_K$ by $(\Pi_K^k v_h, q)_K$ for $q$ being degree $k$ or $k-1$ 
and the latter integral is computable.

The three dimensional local VEM space can be defined as follows:
\begin{definition} [Local three dimensional VEM space on an element $K$ 
\cite{Ahmad;Alsaedi;Brezzi;Marini:2013Equivalent}]
\label{3DVEM}
\begin{equation}
V_k(K):= \big\{v\in H^1(K): \Delta v \in \mathbb{P}_{k-2}(K), 
v|_{\partial K} \in B_k(\partial K) \big\},
\end{equation}
where $B_k(\partial K) := \big\{v\in C^0(\partial K): v|_F\in W_k(F), v|_e \in 
\mathbb{P}_{k}(e) \big\} $.
\end{definition}

Any function in $V_k(K)$ can be uniquely determined by its 
degrees of freedom (\cite{Ahmad;Alsaedi;Brezzi;Marini:2013Equivalent})
 defined in the following paragraph.

\begin{definition} [Degrees of freedom of the three dimensional VEM space]
\label{DOF_3d}We can take the following degrees of freedom of $v_h$ in $V_k(K)$, 
where $K$ is a three dimensional element.
\begin{enumerate}
\item The value of $v_h$ at the vertices of $K$
\item The moments on each edge $e$ up to degree $k-2$. That is, 
$\frac{1}{|e|}\int_e{v_h}m_{\alpha}$ where $m_{\alpha}$ is the scaled monomials 
with $\alpha\leq k-2$.
\item The moments on each face $F$ up to degree $k-2$. That is, 
$\frac{1}{|F|}\int_F{v_h}m_{\alpha}$ where $m_{\alpha}$ is the scaled monomials with 
$\alpha\leq k-2$.
\item The moments on the element $K$ up to degree $k-2$. That is, 
$\frac{1}{|K|}\int_K{v_h}m_{\alpha}$ where $m_{\alpha}$ is the scaled monomials with 
$\alpha\leq k-2$.
\end{enumerate}
\end{definition}
\vspace{1cm}

An alternative definition of the modified VEM space \cite{Brenner;Sung:2018Virtual}, 
that 
allows us to compute both $H^1$ and $L^2$ projection from degrees of freedom is the 
following. We denote such a space $\Tilde{W}_k(D)$, where $D$ can be a polyhedron 
domain in any dimension. For convenience we shall define 
$\Tilde{W}_k(e)=\mathbb{P}_k(e)$ for 
$e$ being 1 dimensional edge and higher dimension spaces are defined recursively. 

\begin{definition} [The modified local $\Tilde{W}_k$ space 
\cite{Brenner;Sung:2018Virtual}]
\label{QK}
Let $D$ be a two or three dimensional polygon or polygonal domain, define the space 
$\Tilde{W}_k(D)$ by 
\begin{equation}
\label{eq:space-QK}
\begin{aligned}
\Tilde{W}_k(D):= \big\{v\in H^1(D): \;\; & \Delta_D v \in \mathbb{P}_{k}(D), 
v|_{\partial D}\in C^0(\partial D), 
\\
& v|_{F}\in \Tilde{W}_k(F)\; \forall F\subset \partial D, \Pi_D^k v_h-Q_D^k v_h\in \mathbb{P}_{k-2}(D) \big\}.
\end{aligned}
\end{equation}
\end{definition}

When computing $L^2$ projection in $\Tilde{W}_k$, we first write $Q_K^k v_h=\Pi_K^k 
v_h+w$, $w\in \mathbb P_{k-2}$, and the corresponding integrals can be computed 
using internal degrees up to $k-2$.

We shall henceforth use the only $\Tilde{W}_k$ to define the global VEM space for 
the rest of the paper. Let 
\begin{equation}\label{eq:space-global}
V_h = \Tilde{W}_h := \{v_h\in H^1_0(\Omega)\cap C^0(\overline{\Omega}): v_h\vert_K 
\in \Tilde{W}_k(K), \; \forall K\in \mathcal{T}_h \}
\end{equation}
be the global VEM space for the rest of the paper, so that $L^2$ projection is 
computable for any three dimensional element $K$.

We then have the following natural definition of the nodal interpolation.

\begin{definition} [The canonical interpolation]
\label{def:interpolation}
For any $u\in H^1(\Omega)$, on $K\in \mathcal{T}_h$, $u_I$ is the local 
interpolation on $K$ which is defined as a function in $\Tilde{W}_k(K)$ that has the 
same degrees of freedom \eqref{DOF_3d} with $u$. Globally, $u_I$ is defined as a 
function in $V_h = \Tilde{W}_h$ that has the same degrees of freedom as $u$.
\end{definition}

We use the same notation $u_I$ for both local and global interpolation, but under 
the proper context it should not be confused.

The following choice of stabilization term is motivated by the error equation that 
will be derived in section \ref{sec:erroreq}. On an element $K$, the stabilization 
term is defined as follows:
\begin{equation}
\label{eq:sk-def}
\begin{aligned}
S_K(u,v) :=\; &h_K^{-1}\;\sum_{F\subset \partial K}
\Bigg[(Q_K u-Q_F u,Q_K v-Q_F v)_{F}
\\
&+\epsilon_F 
h_F\sum_{e\subset \partial F} ( (u-Q_F u), (v-Q_F v))_e\Bigg],
\end{aligned}
\end{equation}
 where $\epsilon_F\propto \rho_F$ is a mesh dependent constant
and the discrete bilinear form is given by 
\begin{equation}
\label{eq:ah-def}
a_h(u,v)=\;\sum_{K\in \mathcal{T}_h} 
\Bigl[\big(\nabla \Pi_K u,\nabla \Pi_K v\big)_K+S_K(u,v)\Bigr].
\end{equation}

Then the VEM approximation problem is: to seek $u_h\in 
V_h$ \eqref{eq:space-global} 
\begin{equation}\label{eq:VEM problem}
a_h(u_h,v_h) = \sum_{K\in \mathcal{T}_h} (f, Q_K v_h)\quad \forall v_h\in 
V_h .
\end{equation}

\subsection{Approximation Theory on Star-Shaped Domains}

In this section, we shall review some existing results on VEM projection 
\eqref{Projection} and 
interpolation error estimates \eqref{theorem:interpolation}. 

\begin{definition} [Star-shaped polytope]
\label{def:star-shape}
Let $D$ be a simple polygon or polyhedron. We said $D$ is star-shaped with respect 
to a disc/ball $B$ if for every point $y\in D$, the convex hull of $\displaystyle 
\{y\}\cup B$ is contained in $D$. If $D$ is star-shaped with respect to a disc/ball 
with radius $\rho h_D$. We define the supremum of $\rho$ to be chunkiness 
parameter $\rho_D$.
\end{definition}

\begin{lemma} [Bramble-Hilbert estimates on star-shaped domain 
\cite{brenner2007mathematical}]
\label{BH lemma star}
Let D be a star-shaped domain, then we have the following estimates,
\begin{equation}
\inf_{q\in \mathbb{P}_l}|u-q|_{H^m(D)}\leq C(\rho_D) 
h_D^{l+1-m}|u|_{H^{l+1}(D)},\text{  } \forall u\in H^{l+1}(D), l=0,1,...,k, m\leq l
\end{equation}
where $C(\rho_D)$ is inverse proportional to $\rho_D$.
\end{lemma}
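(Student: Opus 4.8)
The plan is to follow the averaged Taylor polynomial construction of Brenner and Scott \cite{brenner2007mathematical}, which produces an explicit near-optimal polynomial realizing the infimum rather than arguing abstractly via compactness. Since $D$ is star-shaped with respect to a ball $B$ of radius $\rho_D h_D$, I would fix a cutoff function $\phi \in C_0^\infty(B)$ with $\int_B \phi = 1$ and, for $u \in H^{l+1}(D)$, define the averaged Taylor polynomial of degree $l$,
\[
q = Q^l u(x) := \int_B \Bigl(\sum_{|\alpha|\le l} \tfrac{1}{\alpha!} D^\alpha u(y)\,(x-y)^\alpha \Bigr)\phi(y)\,dy,
\]
which is a genuine polynomial in $x$ of degree at most $l$ and hence an admissible competitor in the infimum. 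The star-shapedness of $D$ with respect to $B$ is precisely what guarantees that the segment joining any $x \in D$ to any $y \in B$ remains in $D$, so that the Taylor integral remainder is well-defined on all of $D$.

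Next I would invoke the Sobolev integral representation of the remainder: for the averaged Taylor polynomial one has a pointwise identity of the form
\[
(u - Q^l u)(x) = \sum_{|\alpha| = l+1} \int_D k_\alpha(x,z)\, D^\alpha u(z)\, dz,
\]
where each kernel $k_\alpha$ is built from $\phi$ and is supported on the set where the segment joining $x$ and $z$ meets $B$. The heart of the argument is to bound the $H^m$-seminorm of this remainder. Differentiating $m$ times in $x$ and estimating the resulting kernel reduces the claim to a convolution/Young-type estimate for a Riesz-potential-like integral operator acting on $|u|_{H^{l+1}(D)}$; the factor $h_D^{\,l+1-m}$ then emerges from a scaling argument after rescaling $D$ to unit diameter, where the operator is bounded on the reference configuration and the powers of $h_D$ are restored by tracking how each derivative and each integration transforms under dilation.

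The step I expect to be the main obstacle is tracking the precise dependence of the constant on the chunkiness parameter $\rho_D$. The cutoff $\phi$ must be supported in the ball of radius $\rho_D h_D$ and normalized to unit mass, so on the reference domain its derivatives scale like inverse powers of $\rho_D$; these derivative bounds are the sole source of $\rho_D$-dependence, as they feed directly into the kernel estimates for $k_\alpha$. Carrying these factors through the kernel and convolution bounds shows that the resulting constant deteriorates as $\rho_D \to 0$, giving a $C(\rho_D)$ that is inversely proportional to $\rho_D$ as claimed. Finally, taking this specific $q = Q^l u$ in the infimum yields the stated estimate simultaneously for all $m \le l$ and all $l = 0, 1, \dots, k$, since the construction and the representation are uniform in these indices.
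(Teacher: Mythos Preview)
Your proposal is correct and follows exactly the averaged Taylor polynomial argument of Brenner and Scott \cite{brenner2007mathematical}, which is precisely the reference the paper cites for this lemma. The paper itself does not give a proof of Lemma \ref{BH lemma star}; it simply quotes the result from \cite{brenner2007mathematical}, so your write-up is in fact more detailed than what the paper provides and matches the intended source.
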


The following scaled trace inequalities are often used when we need to bound norm on 
boundary faces by norm on elements.

\begin{lemma} [Trace inequalities on a star-shaped domain 
\cite{brenner2007mathematical}]
\label{Trace}
Let D be a star-shaped domain, then
\begin{equation}
\|u\|^2_{L^2(\partial D)}\lesssim h_D^{-1}\|u\|^2_{L^2(D)}+h_D|u|^2_{H^1(D)}, 
\forall u\in H^1(D),
\end{equation}
where the constant in $\lesssim$ is inverse proportional to $\rho_D$.
\end{lemma}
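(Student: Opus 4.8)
The plan is to prove the scaled trace inequality by the classical divergence-theorem argument, keeping careful track of the dependence on the chunkiness parameter $\rho_D$. Since $D$ is Lipschitz, $C^\infty(\overline D)$ is dense in $H^1(D)$, so it suffices to establish the estimate for smooth $u$ and then pass to the limit: applying the inequality to differences of a smooth approximating sequence shows that the boundary traces are Cauchy in $L^2(\partial D)$, which both defines the trace and lets the inequality survive the limit. Thus I would work throughout with smooth $u$.

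First I would fix the geometry. After translating coordinates, assume $D$ is star-shaped with respect to the ball $B(0,r)$, where $r$ is any number strictly below $\rho_D h_D$ (the claimed constant is recovered by letting $r\uparrow \rho_D h_D$). The single geometric fact driving the proof is that for $\mathcal H^{d-1}$-almost every $x\in\partial D$, where the outward unit normal $n(x)$ exists, one has $x\cdot n(x)\ge r$. To see this at such an $x$, note that star-shapedness forces the segment from $x$ to every $b\in B(0,r)$ to lie in $\overline D$; since $D$ lies locally on the inner side $\{z:(z-x)\cdot n(x)\le 0\}$, the inward-direction condition $(b-x)\cdot n(x)\le 0$ holds for all such $b$, and taking $b=r\,n(x)$ yields $x\cdot n(x)\ge r$.

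Next I would apply the divergence theorem to the vector field $u^2 x$, where $x$ is the position vector from the centre $0$. Using $\div(u^2 x)=2u\,\nabla u\cdot x + d\,u^2$ in dimension $d$, this gives
\[
\int_{\partial D} u^2\,(x\cdot n)\,\dd s = \int_D \big(2u\,\nabla u\cdot x + d\,u^2\big)\dx .
\]
On the left I bound $x\cdot n\ge r$ from below; on the right I use $|x|\le h_D$ for $x\in D$ (as $\operatorname{diam}D=h_D$ and $0\in D$), Cauchy--Schwarz, and Young's inequality $2h_D\|u\|_{L^2(D)}|u|_{H^1(D)}\le \|u\|_{L^2(D)}^2+h_D^2|u|_{H^1(D)}^2$. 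Collecting terms yields
\[
r\,\|u\|^2_{L^2(\partial D)} \le (1+d)\,\|u\|^2_{L^2(D)} + h_D^2\,|u|^2_{H^1(D)} ,
\]
and dividing by $r$ and letting $r\uparrow\rho_D h_D$ produces exactly the asserted bound with a constant proportional to $1/\rho_D$.

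The main obstacle is the geometric lemma $x\cdot n\ge r$: making it rigorous despite $\partial D$ being only piecewise smooth (a polygon or polyhedron has edges and vertices where $n$ is undefined). I expect this to reduce to the observation that $n$ exists $\mathcal H^{d-1}$-almost everywhere on a Lipschitz boundary and that the segment/half-space argument above applies at each such point, so the inequality holds a.e.\ on $\partial D$, which is all the surface integral requires. The remaining ingredients are routine: the divergence theorem is valid on Lipschitz domains, and the algebraic estimates contribute only dimension-dependent constants that are absorbed into $\lesssim$.
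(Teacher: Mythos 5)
Your proof is correct: the paper does not prove this lemma but simply cites Brenner--Scott, and your divergence-theorem argument with the field $u^2x$ together with the lower bound $x\cdot n\ge r$ on $\partial D$ is precisely the standard proof of the scaled trace inequality on star-shaped domains, with the $1/\rho_D$ dependence tracked correctly. The only point needing care, the a.e.\ validity of $x\cdot n\ge r$ on a polytopal boundary, is handled adequately by your half-space argument at interior points of faces.
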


By combining the Bramble-Hilbert estimates, and the stability of projection 
operators ($Q_D$ and $\Pi_D$) (see 
\cite{Beirao-da-Veiga;Lovadina;Russo:2017Stability,Brenner;Sung:2018Virtual}) in 
$L^2$, $H^1$, and 
$H^2$ norms, we can obtain the following projection error estimates.

\begin{theorem} [Projection error estimate]
\label{Projection}
Let D be a star-shaped domain. Let $\Pi$ be $\Pi_D$ or $Q_D$ then for $m,l,k\in 
\mathbb{N}$, $0\leq m\leq 2$, $\min(1,m)\leq l\leq k$, $u\in H^{l+1}(D)$ we have
\begin{equation}
\displaystyle \|u-\Pi u\|_{m,D}\leq [C(\rho_D) h_D]^{l+1-m} |u|_{l+1,D}
\end{equation} where $C(\rho_D)$ is inverse proportional to $\rho_D$.
\end{theorem}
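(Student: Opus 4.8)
The plan is to reduce the projection error to a best-polynomial-approximation error via an add-and-subtract argument, and then to invoke the Bramble--Hilbert estimate of Lemma~\ref{BH lemma star} together with the $H^m$-stability of the projection. Choosing any $p\in\mathbb{P}_l(D)$ and applying the triangle inequality gives
\begin{equation}
\|u-\Pi u\|_{m,D}\leq \|u-p\|_{m,D}+\|\Pi u-p\|_{m,D}.
\end{equation}
Since $l\leq k$, we have $p\in\mathbb{P}_k(D)$, and both $Q_D$ and $\Pi_D$ reproduce polynomials of degree $k$, so $\Pi p=p$; for $\Pi_D$ this follows from Definition~\ref{def:proj-H1}, because $\Pi_D p-p\in\mathbb{P}_k(D)$ has a gradient orthogonal to $\nabla\mathbb{P}_k(D)$ and is therefore constant, whereupon the normalization constraint forces it to vanish. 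Hence $\Pi u-p=\Pi(u-p)$, and the $H^m$-stability $\|\Pi w\|_{m,D}\lesssim\|w\|_{m,D}$ yields
\begin{equation}
\|u-\Pi u\|_{m,D}\leq \bigl(1+C(\rho_D)\bigr)\,\|u-p\|_{m,D}.
\end{equation}

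To bound the remaining term I would take $p$ to be the averaged Taylor polynomial of degree $l$ underlying Lemma~\ref{BH lemma star}, for which the seminorm estimate $|u-p|_{j,D}\leq C(\rho_D)\,h_D^{l+1-j}|u|_{l+1,D}$ holds simultaneously for all $0\leq j\leq m$. Summing these contributions over $j$ and using $h_D\lesssim1$ so that the lowest power $h_D^{l+1-m}$ dominates, the full Sobolev norm is controlled by its top-order term,
\begin{equation}
\|u-p\|_{m,D}\leq C(\rho_D)\,h_D^{l+1-m}|u|_{l+1,D}.
\end{equation}
Combining with the previous display and absorbing all constants into a single $C(\rho_D)$ that is inverse proportional to $\rho_D$ gives the stated bound in the bracketed form $[C(\rho_D)h_D]^{l+1-m}$. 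The cases $m=0,1,2$ correspond to the $L^2$, $H^1$, and $H^2$ estimates, and the hypothesis $\min(1,m)\leq l$ ensures that each exponent $l+1-j$ for $0\le j\le m$ stays nonnegative, so that no negative power of $h_D$ appears.

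The main obstacle is the step that invokes the $H^m$-stability of $\Pi$. The $L^2$-stability of $Q_D$ is immediate from its definition, but the $H^1$- and $H^2$-stability of $Q_D$, as well as the $H^1$- and $H^2$-stability of $\Pi_D$, are not automatic: they rest on inverse inequalities and norm equivalences on the polynomial space $\mathbb{P}_k(D)$, and the associated constants degenerate as the chunkiness parameter $\rho_D$ shrinks. These stability results are exactly what the references \cite{Beirao-da-Veiga;Lovadina;Russo:2017Stability,Brenner;Sung:2018Virtual} provide, so I would quote them rather than reprove them, checking only that their dependence on $\rho_D$ is consistent with the inverse-proportional constant asserted in the conclusion.
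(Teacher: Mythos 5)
Your proposal is correct and follows exactly the route the paper indicates (the paper gives no detailed proof, only the one-line recipe preceding the theorem: combine the Bramble--Hilbert estimate of Lemma~\ref{BH lemma star} with the $L^2$, $H^1$, $H^2$ stability of $Q_D$ and $\Pi_D$ quoted from \cite{Beirao-da-Veiga;Lovadina;Russo:2017Stability,Brenner;Sung:2018Virtual}). Your add-and-subtract argument with polynomial reproduction, together with the correct observation that the $\rho_D$-dependence enters through the projection stability constants, is precisely that argument spelled out.
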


The optimal order of interpolation operators are much harder to prove. In 
\cite{Brenner;Sung:2018Virtual}, an auxiliary semi-norm is constructed to prove the 
following interpolation estimates. We will list the result here and refer the 
reference for the detail (although the estimate of $|u-Q_D u_I|_{1,D}$ is not 
explicitly given in \cite{Brenner;Sung:2018Virtual}, the derivation follows from 
$H^1$ 
stability of $Q_D$, and the procedure of deriving the estimate of 
$|u-\Pi_D u_I|_{1,D}$ is almost identical). The interpolation estimates for three 
dimensional element require the uniform star shape condition to be of optimal order.

\begin{theorem} [Interpolation error estimate \cite{Brenner;Sung:2018Virtual}]
\label{theorem:interpolation}
Let D be a star-shaped domain. Let $u_I$ be the nodal interpolation of the function 
on the local VEM space defined in \ref{def:interpolation}. We have, for $1\leq l\leq 
k$, 
$\forall u\in H^{l+1}(D)$
\begin{equation}
|u-u_I|_{1,D}+|u-\Pi_D u_I|_{1,D}+|u-Q_D u_I|_{1,D}\lesssim [C(\rho_D) 
h_D]^{l}|u|_{l+1,D}
\end{equation}
\begin{equation}
|u-\Pi_D u_I|_{2,D}\lesssim [C(\rho_D) h_D]^{l-1}|u|_{l+1,D}
\end{equation}
\begin{equation}
\|u-u_I\|_{0,D}+\|u-Q_D u_I\|_{0,D}+\|u-\Pi_D u_I\|_{0,D}\lesssim 
[C(\rho_D) h_D]^{l+1}|u|_{l+1,D}
\end{equation}
The constants $C(\rho_D)$ is inverse proportional to $\rho_D$. 
\end{theorem}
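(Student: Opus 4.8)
The plan is to establish the $H^1$-seminorm estimate first and then deduce the $L^2$ and $H^2$ estimates from it. Let $\pi\in\mathbb{P}_k(D)$ be a near-best polynomial approximation of $u$ furnished by Lemma~\ref{BH lemma star}, so that $|u-\pi|_{m,D}\lesssim [C(\rho_D)h_D]^{l+1-m}|u|_{l+1,D}$ for $0\le m\le l+1$. Since $\pi\in\mathbb{P}_k(D)\subset\Tilde{W}_k(D)$, its canonical interpolant (Definition~\ref{def:interpolation}) reproduces it, $\pi_I=\pi$, and hence $v:=\pi-u_I=(\pi-u)_I$ is a virtual function whose degrees of freedom \eqref{DOF_3d} are exactly those of $\pi-u$. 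Writing $|u-u_I|_{1,D}\le |u-\pi|_{1,D}+|v|_{1,D}$, the first term is already optimal, so the entire problem is reduced to bounding the virtual function $v$ in terms of the degrees of freedom of $\pi-u$.

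Second, I would bound $|v|_{1,D}$ by a discrete, degree-of-freedom--based seminorm. The two computable ingredients are $\Pi_D v$, whose seminorm $|\Pi_D v|_{1,D}^2=(\nabla v,\nabla\Pi_D v)_D$ is rendered accessible to the degrees of freedom through the integration-by-parts identity \eqref{eq:dof-int}, and a degree-of-freedom stabilization $S_D(v,v)$ assembled from the boundary degrees of freedom alone. Invoking a norm equivalence of the type \eqref{eq:normequiv} on the VEM space (with a classical stabilization), equivalently the auxiliary seminorm of \cite{Brenner;Sung:2018Virtual}, gives $|v|_{1,D}^2\lesssim |\Pi_D v|_{1,D}^2+S_D(v,v)$ with a constant carrying the star-shape dependence $C(\rho_D)$; both terms on the right are then controlled by the vertex, edge, face and interior degrees of freedom of $\pi-u$.

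Third, I would estimate those degrees of freedom by descending in dimension from $D$ through each face $F$ and each edge $e$ down to the vertices, applying the scaled trace inequality of Lemma~\ref{Trace} at every step and tracking the powers of $h$ so that each contribution is $\lesssim [C(\rho_D)h_D]^{l}|u|_{l+1,D}$; the vertex values are handled by the three-dimensional Sobolev embedding $H^{l+1}(D)\hookrightarrow C^0(\overline D)$ combined with the same trace chain, after subtracting $\pi$ so that Lemma~\ref{BH lemma star} applies. This step is the main obstacle: each trace inequality inherits a constant inverse proportional to the chunkiness parameter of the sub-domain across which it is taken---first a face, then an edge---so controlling the face- and edge-moment degrees of freedom forces the faces to be uniformly star-shaped and the edges not to be too short. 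This accumulation of geometric constants is exactly why the $H^1$-seminorm interpolation estimate cannot dispense with the uniform star-shape hypothesis, and it is the obstruction the present paper sidesteps by measuring the error in the weaker energy norm $\tnorm{\cdot}$. Collecting the three steps yields $|u-u_I|_{1,D}\lesssim [C(\rho_D)h_D]^{l}|u|_{l+1,D}$.

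Finally, the remaining quantities follow by triangle inequalities feeding off the $H^1$ result and the projection error estimate of Theorem~\ref{Projection}. For the gradient terms, $|u-\Pi_D u_I|_{1,D}\le |u-\Pi_D u|_{1,D}+|\Pi_D(u-u_I)|_{1,D}$, where the first summand is Theorem~\ref{Projection} and the second is $\le |u-u_I|_{1,D}$ since $\Pi_D$ contracts the $H^1$-seminorm; the $Q_D$ version is identical using $H^1$-stability of $Q_D$. The $H^2$ bound follows from $|u-\Pi_D u_I|_{2,D}\le |u-\Pi_D u|_{2,D}+|\Pi_D(u-u_I)|_{2,D}$, controlling the second, polynomial term by an inverse inequality $|\Pi_D(u-u_I)|_{2,D}\lesssim h_D^{-1}|u-u_I|_{1,D}$. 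The $L^2$ bounds use the same decompositions together with the $m=0$ case of Theorem~\ref{Projection} and $L^2$-stability of the projections, the constant part of $v$ being pinned down by its zeroth interior moment.
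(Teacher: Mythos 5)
The paper does not prove this theorem itself; it imports it from \cite{Brenner;Sung:2018Virtual} and only remarks that the proof there rests on an auxiliary (degree-of-freedom--based) seminorm and that the $Q_D$ variant follows from $H^1$-stability of $Q_D$. Your sketch --- reduction via polynomial reproduction $\pi_I=\pi$, a norm-equivalence/auxiliary-seminorm bound on the virtual remainder, DOF estimates by a dimension-descending trace chain, and the secondary estimates by projection stability and inverse inequalities --- is essentially that same route and is consistent with the paper's own description, so I have no substantive objection.
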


\section{A priori error estimate}
\label{sec:erroreq}
In this section, we first verify that the discrete bilinear form induces a norm on 
$V_h$, then an error equation based on the discrete bilinear form is derived. The a 
priori error estimate can be then derived from this error 
equation. 

Recall that on an element $K$, the bilinear form and the stabilization term are 
defined in \eqref{eq:ah-def} and \eqref{eq:sk-def}, and the VEM approximation 
problem is \eqref{eq:VEM problem}. Then the seminorm 
$\tnorm{v}=a_h^{1/2}(v,v)$ induced by $a_h(\cdot,\cdot)$ is verified to be a norm on 
the VEM space with the boundary condition imposed in the following lemma.

\begin{lemma}\label{lemma:norm}
$\tnorm{\cdot}$ is a norm on $V_h\cap H^1_0(\Omega)$.
\end{lemma}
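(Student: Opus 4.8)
The plan is to show $\tnorm{\cdot}$ is a norm by verifying positive definiteness, since the remaining norm axioms (homogeneity and the triangle inequality) follow immediately from the fact that $\tnorm{v}^2 = a_h(v,v)$ is a nonnegative symmetric bilinear form. Thus the only substantive task is to prove that $\tnorm{v} = 0$ implies $v = 0$ for $v \in V_h \cap H^1_0(\Omega)$. Reading off the definitions \eqref{eq:ah-def} and \eqref{eq:sk-def}, $a_h(v,v) = \sum_K \bigl[ |\Pi_K v|_{1,K}^2 + S_K(v,v) \bigr]$, where each summand is nonnegative. Hence $\tnorm{v} = 0$ forces, on every element $K$, both $|\Pi_K v|_{1,K} = 0$ and $S_K(v,v) = 0$ simultaneously.

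First I would analyze the consequences of $S_K(v,v) = 0$. Because $S_K$ is a sum over faces $F \subset \partial K$ of the squared $L^2(F)$-norm of $Q_K v - Q_F v$ plus $\epsilon_F h_F$ times squared $L^2(e)$-norms of $v - Q_F v$ over edges $e \subset \partial F$ (all terms nonnegative, with $\epsilon_F > 0$), vanishing of $S_K(v,v)$ gives $Q_K v = Q_F v$ on each face $F$ and $v = Q_F v$ on each edge $e \subset \partial F$. The edge condition says $v|_e$ is a polynomial of degree $k$ equal to its own face-projection restricted to $e$; combined with $Q_F v = Q_K v|_F$ I aim to deduce that $v$ restricted to $\partial K$ coincides with the single global polynomial $Q_K v \in \mathbb{P}_k(K)$. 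The idea is that $v$ on each edge equals $Q_F v = Q_K v$ there, and since the faces share a common trace $Q_K v$, the boundary data $v|_{\partial K}$ is exactly the trace of $Q_K v$.

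Next I would combine this with $|\Pi_K v|_{1,K} = 0$. The latter means $\Pi_K v$ is a constant polynomial, and by the gradient-projection orthogonality in Definition \ref{def:proj-H1}, $\nabla v$ is $L^2$-orthogonal to all polynomial gradients; together with the boundary information that $v|_{\partial K}$ is the trace of a polynomial, I would argue that $v$ itself must be that polynomial on $K$, and in fact a constant (since its gradient projects to zero). Concretely, writing $v = Q_K v + (v - Q_K v)$ and using that $v - Q_K v$ vanishes on $\partial K$ while $\Delta v \in \mathbb{P}_k$, a testing/integration-by-parts argument should force $v$ to be constant on each element. Finally, continuity of $v$ across $\mathcal{T}_h$ propagates this constant to a single global constant, and the homogeneous Dirichlet condition $v \in H^1_0(\Omega)$ pins that constant to zero.

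The main obstacle I anticipate is the middle step: rigorously passing from the vanishing boundary/stabilization data to the conclusion that $v$ is a (global) polynomial, and then a constant, on each $K$. The subtlety is that $\tnorm{\cdot}$ controls only the projection $\Pi_K v$ and a boundary-type stabilization, not the full $H^1$-seminorm of $v$; one must exploit the defining PDE constraint $\Delta v \in \mathbb{P}_k(K)$ of the space $\Tilde{W}_k(K)$ (Definition \ref{QK}) to recover interior information from boundary information. I would handle this by testing $-(\Delta v, q)_K$ against suitable polynomials and using the projection-matching condition $\Pi_K v - Q_K v \in \mathbb{P}_{k-2}(K)$ built into $\Tilde{W}_k(K)$, so that vanishing of $\Pi_K v$'s gradient and of the boundary terms leaves no room for a nonconstant $v$.
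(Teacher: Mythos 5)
Your reduction to positive definiteness matches the paper's, but from there the two arguments genuinely diverge. The paper never shows that $v$ is constant element by element; it starts from the boundary condition ($Q_F v = 0$ for faces on $\partial\Omega$), uses the face consistency $Q_K v = Q_F v$ and the single-valuedness of $Q_F v$ on interelement faces to propagate $Q_K v = Q_F v = 0$ through the whole mesh, notes that together with $v = Q_F v = 0$ on every edge this annihilates all degrees of freedom in Definition \ref{DOF_3d}, and concludes by unisolvence. Your plan is local-then-global: force $v$ to be constant on each $K$ using only the vanishing of the local terms, then glue by continuity and kill the constant with the Dirichlet condition. Both are legitimate; yours avoids the mesh-connectivity propagation and isolates the element-level rigidity, at the price of having to recover interior information from boundary and projection data on every face and every element.

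That recovery is where your sketch has a concrete hole. The sentence ``since the faces share a common trace $Q_K v$, the boundary data $v|_{\partial K}$ is exactly the trace of $Q_K v$'' does not follow from what precedes it: the edge terms of \eqref{eq:sk-def} only give $v = Q_F v$ on the one-dimensional skeleton $\partial F$, and say nothing about $v$ on the two-dimensional interior of a face $F$, where $v$ is a genuine non-polynomial virtual function. To close this you must run, on each face, exactly the argument you reserve for the element interior: $w := v - Q_F v$ lies in $\Tilde{W}_k(F)$, vanishes on $\partial F$, satisfies $\Delta_F w \in \mathbb{P}_k(F)$, and is $L^2(F)$-orthogonal to $\mathbb{P}_k(F)$ since $Q_F w = 0$, so integration by parts gives $\|\nabla_F w\|_{0,F}^2 = -(\Delta_F w, w)_F = 0$ and hence $w \equiv 0$ (equivalently, all degrees of freedom of $w$ on $F$ vanish). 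Only then is $v|_{\partial K}$ the trace of the single polynomial $Q_K v$, after which the identical computation on $K$ (zero boundary term, $\Delta(v - Q_K v) \in \mathbb{P}_k(K)$ orthogonal to $v - Q_K v$) yields $v = Q_K v \in \mathbb{P}_k(K)$, and $\nabla \Pi_K v = 0$ then forces $\nabla v = 0$. With that face-level step supplied your route is complete; the condition $\Pi_K v - Q_K v \in \mathbb{P}_{k-2}(K)$ that you invoke at the end is not actually needed.
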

\begin{proof}

It suffices to verify that if $\tnorm{v_h} = 0$, then $v_h\equiv 0$. By definition,
when $a_h(v_h,v_h)=0$, we have $\Pi_K v_h=0$ on each $K$, $Q_K v_h=Q_F v_h$ on 
each $F\subset \partial K$, and $v_h=Q_F v_h$ on each edge $e$.

By the boundary condition of the space, $Q_F v_h=0$ for $F$ on $\partial \Omega$. 
Because $Q_K v_h=Q_F v_h$ on each $F\subset \partial K$, that makes $Q_K v_h=0$ for 
$K$ that contains at least a boundary face. For the same reasons, $Q_K v_h=0$ for 
$K'$ that shares a face with $K$, which is an element that contains at least a 
boundary face. Repeat this argument we have $Q_K v_h=Q_F v_h=0$ for each $K$.

In addition, on each edge $v_h=Q_F v_h=0$. That makes the degrees of freedom of 
$v_h$ on each $K$ equal $0$, which in turn implies $v_h=0$ by unisolvence of the VEM 
space \cite{Beirao-da-Veiga;Brezzi;Cangiani;Manzini:2013principles}, which completes 
the proof.
\end{proof}
\begin{lemma} [An identity of the approximated bilinear form]
For $u$ that is the solution to \eqref{eq:weakform}, $u_h$ that is the solution 
to \eqref{eq:VEM problem}, and any $v\in V_h$, the following identity holds, 
\label{lemma:a_h identity}
\begin{equation}
\begin{aligned}
a_h(u_h, v) = \sum_{K\in \mathcal{T}_h}(\nabla \Pi_K u, \nabla \Pi_K v)_K +  
\sum_{K\in \mathcal{T}_h} \langle \nabla( u - \Pi_K u)\cdot n, Q_K v - Q_F 
v \rangle_{\partial K}. 
\end{aligned}
\end{equation}
\end{lemma}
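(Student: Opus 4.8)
The plan is to take the discrete problem as the starting point rather than something to be avoided: since $v\in V_h$, the definition \eqref{eq:VEM problem} of $u_h$ gives $a_h(u_h,v)=\sum_{K}(f,Q_K v)_K$ at once, so the whole task reduces to rewriting $\sum_{K}(f,Q_K v)_K$ in terms of $\Pi_K$ and a boundary remainder. The engine for this is to reinstate the strong form $-\Delta u=f$ (valid because $u$ solves \eqref{eq:weakform}) and to integrate by parts twice on each element, exploiting the two orthogonality properties available: the $H^1$-projection identity $(\nabla(u-\Pi_K u),\nabla p)_K=0$ from Definition \ref{def:proj-H1}, and the defining $L^2$-orthogonalities of $Q_K$ and $Q_F$ from Definition \ref{def:proj-L2}.

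Concretely, on each $K$ I would first write $(f,Q_K v)_K=(-\Delta u,Q_K v)_K=(\nabla u,\nabla Q_K v)_K-\langle\nabla u\cdot n,Q_K v\rangle_{\partial K}$; because $Q_K v\in\mathbb{P}_k(K)$ the $H^1$-projection property lets me replace $\nabla u$ by $\nabla\Pi_K u$ in the volume term, and integrating by parts back yields $(f,Q_K v)_K=-(\Delta\Pi_K u,Q_K v)_K+\langle\nabla\Pi_K u\cdot n,Q_K v\rangle_{\partial K}-\langle\nabla u\cdot n,Q_K v\rangle_{\partial K}$. In parallel I would expand the target consistency term the same way: $(\nabla\Pi_K u,\nabla\Pi_K v)_K=(\nabla\Pi_K u,\nabla v)_K=-(\Delta\Pi_K u,v)_K+\langle\nabla\Pi_K u\cdot n,v\rangle_{\partial K}$, and then use $\Delta\Pi_K u\in\mathbb{P}_{k-2}(K)$ and $\nabla\Pi_K u\cdot n|_F\in\mathbb{P}_{k-1}(F)$ (faces are planar, so $n$ is constant on each $F$) to swap $v$ for $Q_K v$ in the volume integral and for $Q_F v$ on each face, giving $(\nabla\Pi_K u,\nabla\Pi_K v)_K=-(\Delta\Pi_K u,Q_K v)_K+\langle\nabla\Pi_K u\cdot n,Q_F v\rangle_{\partial K}$. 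Subtracting the two expansions, the volume Laplacian terms cancel identically and only boundary integrals of the form $\langle\nabla\Pi_K u\cdot n,Q_K v-Q_F v\rangle_{\partial K}-\langle\nabla u\cdot n,Q_K v\rangle_{\partial K}$ remain on each element.

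The final, and genuinely non-local, step is to sum these element contributions and reorganize the stray $\langle\nabla u\cdot n,Q_K v\rangle$ term so that everything collapses into the boundary combination $\langle\nabla(u-\Pi_K u)\cdot n,Q_K v-Q_F v\rangle_{\partial K}$ of the statement. This rests on the vanishing of $\sum_{K}\langle\nabla u\cdot n,Q_F v\rangle_{\partial K}$: on each interior face the exact normal flux $\nabla u\cdot n$ is single-valued because $-\Delta u=f\in L^2(\Omega)$ globally forces continuity of the flux across $F$, while $Q_F v$ depends only on $v|_F$ and is therefore single-valued as well, so the two adjacent elements contribute with opposite outward normals and cancel; on boundary faces $v\in H^1_0(\Omega)$ forces $Q_F v=0$. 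Adding this vanishing sum upgrades $\langle\nabla u\cdot n,Q_K v\rangle$ to $\langle\nabla u\cdot n,Q_K v-Q_F v\rangle$, which then merges with the $\nabla\Pi_K u$ boundary term into the claimed expression (the orientation of $n$ fixing the final bookkeeping of the sign).

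I expect this last cancellation to be the main obstacle, for two reasons. It is the only place where the conformity of $V_h$ and the global regularity of $u$ enter, rather than purely elementwise algebra, and it is where the trace $\nabla u\cdot n$ must be given a rigorous meaning: I would interpret $\langle\nabla u\cdot n,\cdot\rangle_{\partial K}$ and the flux continuity through the normal-trace pairing for $\nabla u\in H(\mathrm{div};\Omega)$, which is legitimate precisely because $\Delta u\in L^2(\Omega)$. Everything preceding it is routine integration by parts together with the defining orthogonalities of $\Pi_K$, $Q_K$ and $Q_F$.
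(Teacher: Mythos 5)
Your proposal is correct and follows essentially the same route as the paper's proof: elementwise integration by parts combined with the orthogonality of $\Pi_K$, $Q_K$, $Q_F$ to swap $u\leftrightarrow\Pi_K u$ and $v\leftrightarrow Q_K v, Q_F v$, and the global cancellation $\sum_{K}\langle\nabla u\cdot n,Q_F v\rangle_{\partial K}=0$ from flux continuity and the single-valuedness of $Q_F v$ (the paper merely organizes the algebra as a sum of three Green's identities rather than a difference of two expansions). The only discrepancy is the overall sign of the boundary term, a bookkeeping issue already present in the paper's own intermediate identities and immaterial for the subsequent Cauchy--Schwarz estimate.
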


\begin{proof}

First we apply the integration by parts to $u,v,\Pi_K u, Q_Kv$, and use the 
definitions of $H^1$-projection $\Pi_K$ and $L^2$-projection $Q_K$ to get
\begin{align*}
-(\Delta u, Q_K v)_K &= (\nabla \Pi_K u, \nabla Q_K v)_K 
+ \langle \nabla u\cdot n, 
Q_K v \rangle_{\partial K},\\[2pt]
(\Delta \Pi_K u, Q_K v)_K &= -(\nabla \Pi_K u, \nabla Q_K v)_K - \langle \nabla 
\Pi_K u\cdot n, Q_K v \rangle_{\partial K},\\[2pt]
\text{ and }\;- (\Delta \Pi_K u, v)_K &= (\nabla \Pi_K u, \nabla v)_K + \langle 
\nabla \Pi_K u\cdot n, Q_F v \rangle_{\partial K}.
\end{align*}
Adding above equations together and notice that $(\Delta \Pi_K u, Q_K v)_K = (\Delta 
\Pi_K u, v)_K$. By the definition of $Q_K$, we get
\begin{align*}
-(\Delta u, Q_K v)_K =\; &(\nabla \Pi_K u, \nabla v)_K \\
& +  \langle (\nabla u -\nabla \Pi_K u)\cdot n, Q_K v \rangle_{\partial K} \\
& +  \langle \nabla \Pi_K u\cdot n, Q_F v \rangle_{\partial K}.
\end{align*}
By definition, the first term can be rewritten as 
$(\nabla \Pi_K u, \nabla \Pi_K v)_K$. 
By the continuity of $\nabla u\cdot n$ across the interelement faces, and the fact 
that $Q_Fv$ is single value on the face $F$, 
$\sum_{K\in \mathcal{T}_h} \langle \nabla u \cdot n, Q_F v \rangle_{\partial 
K} = 0$. As a result, recalling the VEM approximation problem in equation 
\eqref{eq:VEM problem}, we arrive at the following identity
\begin{align*}
a_h(u_h, v)= &-\sum_{K\in \mathcal{T}_h} (\Delta u, Q_K v)_K
\\
= & \sum_{K\in \mathcal{T}_h}(\nabla \Pi_K u, \nabla \Pi_K v)_K +  
\sum_{K\in \mathcal{T}_h} \langle \nabla( u - \Pi_K u)\cdot n, Q_K v - Q_F 
v \rangle_{\partial K}.
\end{align*}
\end{proof}

\begin{theorem}[An error equation]
\label{theorem:err-eq}
Under the same setting with Lemma \ref{lemma:a_h identity}, let $u_I$ be the VEM 
interpolation in \eqref{def:interpolation}, and using the stabilization in 
\eqref{eq:sk-def}, the following identity holds,
\begin{equation}
\label{eq:error}
\begin{aligned}
a_h\bigl(u_h - u_I, v_h\bigr) =& \;\sum_{K\in \mathcal{T}_h} \Bigg[ (\nabla 
\Pi_K(u-u_I), \nabla \Pi_K v_h)_K
\\
& +\;\sum_{F\subset \partial K}\Bigl( (\nabla (\Pi_Ku-u)\cdot n, Q_K v_h-Q_F v_h)_{F}
\\&-h_K^{-1}(Q_K u_I-Q_F u_I,Q_K v_h-Q_F v_h)_{F}
\\&-\epsilon_F h_F\sum_{e\subset \partial F} (u_I-Q_F u_I,v_h-Q_F v_h)_e\Bigr)\Bigg]
\\
\end{aligned}
\end{equation}
\end{theorem}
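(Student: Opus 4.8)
The plan is to exploit the bilinearity of $a_h(\cdot,\cdot)$ and reduce everything to Lemma~\ref{lemma:a_h identity} together with the defining formulas \eqref{eq:sk-def}--\eqref{eq:ah-def}; no new analytic ingredient is required, so the entire argument is essentially a bookkeeping exercise. First I would write $a_h(u_h-u_I,v_h)=a_h(u_h,v_h)-a_h(u_I,v_h)$, which is legitimate because each local contribution $(\nabla\Pi_K\cdot,\nabla\Pi_K\cdot)_K$ and each $S_K(\cdot,\cdot)$ is bilinear and $\Pi_K,Q_K,Q_F$ are linear operators.

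For the first term I would invoke Lemma~\ref{lemma:a_h identity} with $v=v_h$, giving $\sum_K(\nabla\Pi_K u,\nabla\Pi_K v_h)_K+\sum_K\langle\nabla(u-\Pi_K u)\cdot n,\,Q_K v_h-Q_F v_h\rangle_{\partial K}$. For the second term I would simply substitute $u=u_I$ into the definition \eqref{eq:ah-def}, obtaining $\sum_K(\nabla\Pi_K u_I,\nabla\Pi_K v_h)_K+\sum_K S_K(u_I,v_h)$; here $u_I$ plays no special role beyond being an element of $V_h$, so no interpolation estimate is invoked at this stage.

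Subtracting the two expressions, the two polynomial gradient-projection terms combine by linearity of $\Pi_K$ into $\sum_K(\nabla\Pi_K(u-u_I),\nabla\Pi_K v_h)_K$, which is the first term on the right-hand side of \eqref{eq:error}. The boundary integral $\sum_K\langle\nabla(u-\Pi_K u)\cdot n,\,Q_K v_h-Q_F v_h\rangle_{\partial K}$ is inherited from the lemma and, after writing $\langle\cdot,\cdot\rangle_{\partial K}=\sum_{F\subset\partial K}(\cdot,\cdot)_F$, supplies (up to the orientation convention for the outward normal $n$) the $(\nabla(\Pi_K u-u)\cdot n,\,Q_K v_h-Q_F v_h)_F$ face term appearing in \eqref{eq:error}. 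Finally, expanding the subtracted $-\sum_K S_K(u_I,v_h)$ through \eqref{eq:sk-def} produces exactly the two negative stabilization contributions, namely the $-h_K^{-1}(Q_K u_I-Q_F u_I,\,Q_K v_h-Q_F v_h)_F$ face term and the $-\epsilon_F h_F\sum_{e\subset\partial F}(u_I-Q_F u_I,\,v_h-Q_F v_h)_e$ edge term.

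The only point demanding care---and the thing I would double-check most carefully---is the sign bookkeeping: tracking the overall minus sign carried by $-a_h(u_I,v_h)$ through every summand of $S_K$, and fixing the orientation of the normal $n$ so that the boundary flux term matches the sign displayed in \eqref{eq:error}. Since both $Q_F v_h$ and the flux $\nabla(\cdot)\cdot n$ enter only through $a_h(u_h,\cdot)$, no cancellation across interelement faces (of the kind used inside the proof of Lemma~\ref{lemma:a_h identity}) is needed at this stage; once the terms are grouped by element $K$, face $F$, and edge $e$, the identity \eqref{eq:error} follows directly.
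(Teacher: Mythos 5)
Your proposal is correct and is exactly the paper's argument: the paper's proof consists of the single line ``It follows directly from Lemma~\ref{lemma:a_h identity} and Definition~\ref{eq:sk-def},'' and your bookkeeping (split by bilinearity, apply the lemma to $a_h(u_h,v_h)$, expand $a_h(u_I,v_h)$ from \eqref{eq:ah-def}--\eqref{eq:sk-def}, and combine the gradient-projection terms by linearity of $\Pi_K$) is precisely the omitted computation. The sign issue you flag is real --- the lemma has $\nabla(u-\Pi_K u)$ while \eqref{eq:error} displays $\nabla(\Pi_K u-u)$ --- but this discrepancy sits in the paper's own statement rather than in your argument, and it is immaterial for the subsequent Cauchy--Schwarz bound in Corollary~\ref{corollary:apriori}.
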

\begin{proof}
It follows directly from Lemma \ref{lemma:a_h identity} and Definition 
\ref{eq:sk-def}.
\end{proof}

\begin{corollary} [An \textit{a priori} error bound] For a constant 
$\epsilon_F\propto \rho_F$, the following a priori error 
estimate holds for $u_h$ and $u_I$ (defined in \ref{def:interpolation}) with a 
constant independent of the chunkiness parameter for $\rho_F$ of each face in the 
underlying mesh,
\label{corollary:apriori}
\begin{equation}
\label{eq:apriori}
\begin{aligned}
 \tnorm{u_h-u_I}^2 \lesssim &\;\sum_{K\in \mathcal{T}_h} \Bigg[\|\nabla 
\Pi_K(u-u_I)\|^2_{0,K}
+\;\sum_{F\subset \partial K}\Bigl(h_K\|\nabla (\Pi_Ku-u)\cdot n\|^2_{0,F}
\\
&+h_K^{-1}\|Q_K u_I-Q_F u_I\|^2_{0,F}
+\epsilon_F\sum_{e\subset \partial F} \|u_I-Q_F u_I\|^2_{0,e}\Bigr)\Bigg].
\\
&
\end{aligned}
\end{equation}
\end{corollary}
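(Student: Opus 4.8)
The plan is to run a standard energy argument directly on the error equation of Theorem~\ref{theorem:err-eq}. First I would choose the test function $v_h = u_h - u_I$ in the identity \eqref{eq:error}. By the definition \eqref{eq:ah-def} of $a_h(\cdot,\cdot)$ the left-hand side then becomes exactly $a_h(u_h-u_I,u_h-u_I) = \tnorm{u_h-u_I}^2$, which is a genuine squared norm by Lemma~\ref{lemma:norm}. The whole problem thus reduces to estimating the four families of terms on the right of \eqref{eq:error}, each of which is an $L^2$-pairing of a \emph{data factor} (assembled from $u$, $\Pi_K u$ and $u_I$) against a \emph{test factor} depending only on $v_h$.

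The key structural observation is that the three test factors that occur — $\nabla\Pi_K v_h$ on $K$, the face jump $Q_K v_h - Q_F v_h$ on $F$, and the edge jump $v_h - Q_F v_h$ on $e$ — are precisely the three quantities whose weighted $L^2$-norms constitute $a_h(v_h,v_h)$, through the projection term in \eqref{eq:ah-def} together with the two pieces of the stabilization \eqref{eq:sk-def}. Hence, once equipped with the correct inverse weight, each test factor is controlled by $\tnorm{v_h}$: the squared sums of $\|\nabla\Pi_K v_h\|_{0,K}$, of $h_K^{-1/2}\|Q_K v_h - Q_F v_h\|_{0,F}$, and of $(h_K^{-1}\epsilon_F h_F)^{1/2}\|v_h - Q_F v_h\|_{0,e}$ are each bounded by $\tnorm{v_h}^2$. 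This is the whole reason the stabilization was designed in the form \eqref{eq:sk-def}.

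Then I would apply Cauchy--Schwarz to every term, inserting these weights so that each test factor is matched to its $\tnorm{\cdot}$-controlled form and the complementary power of the weight is pushed onto the data factor; a weighted Young inequality with a small parameter $\delta$ splits each product into $\delta$ times a test-factor square plus $\delta^{-1}$ times a data-factor square. Summing over all $K$, $F$ and $e$ collects the test-factor contributions into $C\delta\,\tnorm{v_h}^2 = C\delta\,\tnorm{u_h-u_I}^2$, which for $\delta$ small enough is absorbed into the left-hand side, while the data-factor squares assemble into the four terms on the right of \eqref{eq:apriori}.

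The delicate point, and the real content of the corollary, is the weight bookkeeping, most notably for the edge contribution: the coefficient $\epsilon_F h_F$ appearing in \eqref{eq:error} differs from the coefficient $h_K^{-1}\epsilon_F h_F$ that defines $\tnorm{\cdot}$ through \eqref{eq:sk-def}, so I must carefully track how the leftover powers of $h_K$, $h_F$ and $\epsilon_F\propto\rho_F$ land on each data factor (using, where needed, the elementary mesh relation $h_F\le h_K$). The benefit of this particular choice of weights is that the split is balanced so that no Cauchy--Schwarz or trace constant acquires an inverse power of the face chunkiness parameter $\rho_F$; verifying this $\rho_F$-independence, rather than the mechanics of the splitting itself, is the main obstacle.
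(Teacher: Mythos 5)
Your proposal is correct and follows essentially the same route as the paper: test the error equation with $v_h=u_h-u_I$, Cauchy--Schwarz each term so that the test factors $\nabla\Pi_K v_h$, $Q_Kv_h-Q_Fv_h$ and $v_h-Q_Fv_h$ reassemble (with the weights from \eqref{eq:sk-def}) into $\tnorm{u_h-u_I}$, and absorb that factor into the left-hand side. The only cosmetic difference is that you absorb via Young's inequality with a small parameter while the paper simply cancels one power of $\tnorm{u_h-u_I}$; your explicit tracking of the $h_K^{-1}\epsilon_Fh_F$ versus $\epsilon_Fh_F$ edge weights via $h_F\le h_K$ is exactly the bookkeeping the paper leaves implicit.
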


\begin{proof}
From the error equation, plug in $v_h=u_h-u_I$ and apply the Cauchy-Schwarz 
inequality, we have 

\begin{equation}
\begin{aligned}
\tnorm{u_h-u_I}^2 \lesssim &\;\sum_{K\in \mathcal{T}_h} \Bigl[\|(\nabla 
\Pi_K(u-u_I)\|_{0,K}\|\nabla\Pi_Kv_h\|_{0,K}
\\  &+
 \;\sum_{F\subset \partial K}\Bigl(h_K^{1/2}\|\nabla (\Pi_Ku-u)\cdot 
 n\|_{0,F}h_K^{-1/2}\|Q_Kv_h-Q_F v_h\|_{0,F}
\\ \;
&+h_K^{-1/2}\|Q_K u_I-Q_F u_I\|_{0,F}h_K^{-1/2}\|Q_Kv_h-Q_F v_h\|_{0,F}+
\\
&\sum_{e\subset \partial F} \epsilon_F\| (u_I-\Pi_F u_I)\|_{0,e}\epsilon_F\| 
(v_h-\Pi_F v_h)\|_{0,e}\Bigr)\Bigr].
\end{aligned}
\end{equation}

The second part of each term is clearly parts of $\tnorm{u_h-u_I}$ and therefore can 
be bounded by $\tnorm{u_h-u_I}$. After cancelling $\tnorm{u_h-u_I}$ we get the 
estimate.
\end{proof}

\section{Geometric conditions and error estimations}
\label{sec:estimation}

In this section, based on the a priori error estimate in Corollary 
\ref{corollary:apriori}, the energy norm estimate follows from estimating each term 
in \eqref{eq:apriori}. The necessary geometry conditions, which are motivated by 
\eqref{eq:apriori} to have optimal order of convergence, are proposed as follows.

\begin{assumption}[Geometric conditions]
\label{assumptions:geometry}
For each element $K\in \mathcal{T}_h$, the following three geometric conditions are 
met:
\begin{enumerate}
\item Number of faces in $K$ is uniformly bounded.
\item $K$ is star-shaped with the chunkiness parameter $\rho_K$ defined in 
\ref{def:star-shape} bounded below. 
\item For each $F\subset \partial K$, $F$ is star-shaped, but the chunkiness 
parameter $\rho_F$ may not be uniformly bounded below.
\end{enumerate}
\end{assumption}
An example of the polyhedral element satisfying Assumption 
\ref{assumptions:geometry} is shown in Figure \ref{fig:anisoface}, on which the 
$H^1$-seminorm error estimate will have an undesirable $|\ln \epsilon|$ factor 
related to $\rho_F$ (see 
\cite{Beirao-da-Veiga;Lovadina;Russo:2017Stability,Brenner;Sung:2018Virtual}). 
Subsequently, we will show that on a weaker energy norm $\tnorm{\cdot}$, the a 
priori error estimate is independent of $\rho_F$ and only dependent on $\rho_K$.

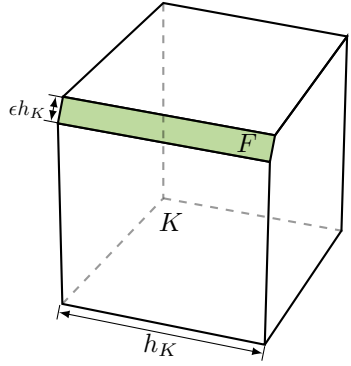
\begin{figure}[h]
\centering
\definecolor{ifemgreen}{RGB}{163,203,118}
\tdplotsetmaincoords{60}{110}
\begin{tikzpicture}[scale=1,point/.style = {draw, circle,  fill = black, inner sep 
= 1pt},>=latex, tdplot_main_coords]
\tikzset{every mark/.append style={solid,scale=2,opacity=0.8,fill=blue}}

\coordinate (A) at (0,-0.1,2.7);
\coordinate (B) at (-0.2,-0.1,3);

\coordinate (D) at (0,0,-0.2);
\coordinate (E) at (0.1,2.95,-0.2);

\draw[thick] (0.1,0.1,0) -- (0,0,2.7)-- (0,3,2.7) -- (0.2,3,0) -- cycle;

\draw[thick] (-0.2,0,3) -- (-3,0.4,3) -- (-3,3,3) -- (-0.2,3,3) -- cycle;

\draw[thick, fill=ifemgreen, fill opacity=0.7] (0,0,2.7)-- (0,3,2.7) 
--(-0.2,3,3) -- (-0.2,0,3) -- cycle;

\draw[thick] (0.2,3,0) -- (-3,2.92,0) -- (-3,3,3) -- (-0.2,3,3);

\draw[thick,dashed,opacity=0.4] (0.1,0.1,0) -- (-3,0.4,0) -- (-3,0.4,3) ;

\draw[thick, dashed,opacity=0.4] (-3,0.4,0) -- (-3,3,0);
    
\draw[|<->|] (A) -- (B);
\draw[|<->|] (D) -- (E);
\node[anchor = east,scale=0.8] at ($(A)!0.5!(B)$) {$\epsilon h_{K}$};
\node[anchor = south] at ($(D)!0.5!(E) + (0,0,-0.5)$) {$h_K$};

\node at (-0.2,2.6,2.8) {$F$};
\node at (0,1.6,1.5) {$K$};

\end{tikzpicture}
\caption{Set $\epsilon\to 0$ as $h_K\to 0$. $K$ is a cube without a prismatic slit 
with $\rho_K>1/2$ when $\epsilon$ is small, which satisfies all three assumptions in 
\ref{assumptions:geometry}. However, 
the chunkiness parameter $\rho_F\to 0$ for the marked face $F$, and this is 
problematic for the error estimate under $|\cdot|_1$. }
\label{fig:anisoface}
\end{figure}

\begin{lemma}[Optimal order error estimate of the stabilization term on a face] 
\label{lemma:stab-face}
Let $u\in H^{k+1}(K)$, for $k \geq 1$, and $u_I$ be the VEM space interpolation 
defined in \ref{def:interpolation}. Suppose the 
geometric assumptions \ref{assumptions:geometry} hold, then
\begin{equation}
h_K^{-1/2}\|Q_K u_I-Q_F u_I\|_{0,F}\lesssim h_K^{k}|u|_{k+1,K}.
\end{equation}
\end{lemma}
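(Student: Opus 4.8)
The plan is to reduce everything to estimates on the three-dimensional element $K$, so that only $\rho_K$ (bounded below by Assumption \ref{assumptions:geometry}) enters the constants and the degenerate face parameter $\rho_F$ never appears. The starting observation is that $Q_K u_I$ is a polynomial of degree at most $k$ on $K$, so its trace on the planar face $F$ lies in $\mathbb{P}_k(F)$ and is therefore fixed by the $L^2(F)$-projection $Q_F$. This yields the identity
\begin{equation*}
Q_K u_I - Q_F u_I = Q_F\bigl(Q_K u_I\bigr) - Q_F u_I = Q_F\bigl(Q_K u_I - u_I\bigr) \quad \text{on } F.
\end{equation*}
Since $Q_F$ is an orthogonal projection, it is a contraction in $\|\cdot\|_{0,F}$ with constant $1$, independent of the shape of $F$, so
\begin{equation*}
\|Q_K u_I - Q_F u_I\|_{0,F} \leq \|Q_K u_I - u_I\|_{0,F},
\end{equation*}
and $\rho_F$ has been eliminated from the analysis at the outset.

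Next I would insert the exact solution and split $Q_K u_I - u_I = (Q_K u_I - u) - (u_I - u)$ on $F$, then bound each piece by the scaled trace inequality of Lemma \ref{Trace}: restricting to the single face $F\subset\partial K$, for $w \in \{\,Q_K u_I - u,\ u_I - u\,\}$ one has
\begin{equation*}
\|w\|_{0,F}^2 \leq \|w\|_{0,\partial K}^2 \lesssim h_K^{-1}\|w\|_{0,K}^2 + h_K\,|w|_{1,K}^2,
\end{equation*}
with constant governed by $\rho_K$.

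The two required volume bounds then come straight from Theorem \ref{theorem:interpolation} with $l = k$: namely $\|u - Q_K u_I\|_{0,K} + \|u - u_I\|_{0,K} \lesssim h_K^{k+1}|u|_{k+1,K}$ and $|u - Q_K u_I|_{1,K} + |u - u_I|_{1,K} \lesssim h_K^{k}|u|_{k+1,K}$. Substituting these into the trace estimate, both the $h_K^{-1}\|\cdot\|_{0,K}^2$ and the $h_K|\cdot|_{1,K}^2$ contributions balance to order $h_K^{2k+1}|u|_{k+1,K}^2$, whence $\|Q_K u_I - Q_F u_I\|_{0,F} \lesssim h_K^{k+1/2}|u|_{k+1,K}$; multiplying by $h_K^{-1/2}$ delivers the claim.

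The only genuine idea is the first step: recognizing that $Q_F$ fixes $Q_K u_I|_F$ and is an $L^2$-contraction, which is exactly what lets the bound avoid any dependence on $\rho_F$. I expect the main (and modest) obstacle to be the bookkeeping of powers of $h_K$ through the trace split, verifying that the volume and gradient terms contribute at the \emph{same} optimal order $h_K^{k+1/2}$ on the face; everything following the identity is a routine application of the already-cited interpolation and trace results on $K$.
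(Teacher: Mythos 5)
Your proposal is correct and follows essentially the same route as the paper's proof: the identity $Q_K u_I - Q_F u_I = Q_F(Q_K u_I - u_I)$ together with the $L^2(F)$-contractivity of $Q_F$ (which is exactly how the paper eliminates $\rho_F$), then the triangle inequality, the scaled trace inequality on $K$, and the interpolation/projection estimates on $K$. Your write-up is in fact slightly more explicit than the paper's about why $Q_F$ fixes $Q_K u_I|_F$ and why the resulting constant depends only on $\rho_K$.
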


\begin{proof}
By triangle inequality,
\begin{equation}
\begin{aligned}
h_K^{-1/2}\|Q_K u_I-Q_F u_I\|_{0,F} & = h_K^{-1/2}\|Q_F(Q_K u_I- u_I)\|_{0,F}
\\ 
& \lesssim  h_K^{-1/2}\|Q_K u_I- u_I\|_{0,F}
\\
& \lesssim  h_K^{-1/2}(\|Q_K u_I-u\|_{0,F}+\|u-u_I\|_{0,F})
\\
& \lesssim  h_K^{-1}(\|Q_K u_I-u\|_{0,K}+\|u-u_I\|_{0,K})\\&+(|Q_K 
u_I-u|_{1,K}+|u-u_I|_{1,K})
\\
& \lesssim  h_K^{k}|u|_{k+1,K}
\end{aligned}
\end{equation}
where Theorems \ref{Projection}, \ref{theorem:interpolation}, \ref{Trace} are 
applied.
\end{proof}

\begin{lemma}[Optimal order error estimate of stabilization term on an edge] 
\label{stab order edge}
Under the same setting with Lemma \ref{lemma:stab-face}, for a mesh dependent 
constant $\epsilon_F\propto \rho_F$, we have
\begin{equation}
\epsilon_F \|u_I-Q_F u_I\|_{0,e}\lesssim \rho_F^{k-1}h_F^{k}|u|_{k+1,K},
\end{equation}
\end{lemma}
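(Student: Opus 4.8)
The plan is to imitate the structure of the proof of Lemma~\ref{lemma:stab-face}, the difference being that we now need an $L^2$-norm on the one-dimensional edge $e$ rather than on the face $F$; the extra trace from $F$ down to $e$ is exactly where the face chunkiness $\rho_F$ will appear. First I would insert the polynomial $Q_K u_I\in\mathbb{P}_k(K)$ and use that $Q_F$ fixes polynomials on $F$ to write, on $F$ and hence on $e\subset\partial F$, the identity $u_I-Q_F u_I=(u_I-Q_K u_I)-Q_F(u_I-Q_K u_I)$. Setting $w:=u_I-Q_K u_I$ and applying the triangle inequality then splits the estimate into a ``rough'' part $\|w\|_{0,e}$, where $w$ is a VEM-minus-polynomial function, and a ``polynomial'' part $\|Q_F w\|_{0,e}$ with $Q_F w\in\mathbb{P}_k(F)$.

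For the rough part I would route the estimate through the element $K$ rather than through $F$: apply the trace inequality of Lemma~\ref{Trace} first from $e\subset\partial F$ to $F$ (whose constant is inverse proportional to $\rho_F$) and then from $F\subset\partial K$ to $K$ (whose constant is controlled by the uniformly bounded $\rho_K$), reducing everything to $\|w\|_{0,K}$, $|w|_{1,K}$ and $|w|_{2,K}$. After inserting $u$, these are bounded by the projection estimate (Theorem~\ref{Projection}), the interpolation estimate (Theorem~\ref{theorem:interpolation}) and Bramble--Hilbert (Lemma~\ref{BH lemma star}) on $K$, all with $\rho_K$-dependent and hence bounded constants, exactly as in Lemma~\ref{lemma:stab-face}. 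Routing through $K$ is essential here, both because $u|_F$ does not inherit full $H^{k+1}(F)$ regularity by trace and because $\rho_F$ is not bounded below. For the polynomial part, since $Q_F w\in\mathbb{P}_k(F)$ I would combine a scaled trace inequality from $F$ to $e$ with an inverse inequality on $F$, and then use the $L^2(F)$-stability $\|Q_F w\|_{0,F}\le\|w\|_{0,F}$ to fall back onto the same element estimates. Finally, multiplying by $\epsilon_F\propto\rho_F$ and using $h_F\lesssim h_K$ should assemble the claimed bound $\rho_F^{k-1}h_F^{k}|u|_{k+1,K}$.

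The hard part will be obtaining the \emph{correct} power of the small, non-uniform $\rho_F$, in particular the favorable positive powers $\rho_F^{k-1}$ that make the estimate sharpen as the order $k$ grows. The generic star-shaped trace of Lemma~\ref{Trace} together with a generic polynomial inverse inequality is too lossy on an anisotropic face: naively multiplying their worst-case constants produces only a fixed negative power of $\rho_F$, independent of $k$. To recover the stated power one must exploit the geometry of the star-shaped face from Definition~\ref{def:star-shape}, namely that the inscribed ball has radius $\sim\rho_F h_F$ and that the relevant edges sit at the scale $\rho_F h_F$, so that the trace in the thin direction and the directional inverse inequality are governed by $\rho_F h_F$ rather than $h_F$. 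This short-edge length scale is what turns the $h_F^{k}$ approximation factor into the $\rho_F^{k-1}h_F^{k}$ appearing on the right-hand side. Carrying out this anisotropy-aware bookkeeping, while keeping the non-polynomial remainder confined to $K$ where $\rho_K$ is bounded and $u\in H^{k+1}$ genuinely holds, is the delicate step on which the whole estimate rests.
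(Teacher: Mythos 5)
Your decomposition $u_I-Q_Fu_I=w-Q_Fw$ with $w:=u_I-Q_Ku_I$ is a genuinely different starting point from the paper's, which never inserts $Q_Ku_I$: the paper applies Lemma~\ref{Trace} once on $F$ (from $e$ to $F$, paying a factor $\epsilon_F^{-1}\propto\rho_F^{-1}$), inserts $u$ by the triangle inequality, and then bounds every resulting term by the \emph{face} versions of Theorems~\ref{theorem:interpolation} and~\ref{Projection} (with $D=F$), using a single inverse inequality on $F$ for the leftover term $|\Pi_Fu_I-Q_Fu_I|_{1,F}$; every inverse power of $\rho_F$ so generated is absorbed into $\epsilon_F$ and the stated constant. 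The step of your plan that fails is the ``rough part'': tracing $\|w\|_{0,e}$ first to $F$ and then to $K$ forces you, when Lemma~\ref{Trace} is applied to the term $|w|_{1,F}$, to control $|w|_{2,K}=|u_I-Q_Ku_I|_{2,K}$ and hence $|u_I|_{2,K}$. A function of $\Tilde{W}_k(K)$ is only known to lie in $H^1(K)$ with $\Delta u_I\in\mathbb{P}_k(K)$; on a general (possibly non-convex) polyhedron it need not belong to $H^2(K)$, and none of the available results bounds $|u-u_I|_{2,K}$ --- Theorem~\ref{theorem:interpolation} only controls $|u-\Pi_Du_I|_{2,D}$, where the interpolant has first been projected onto a polynomial. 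This is exactly why the paper keeps the non-polynomial quantity $u_I-Q_Fu_I$ on the face and measures it there only in $L^2(F)$ and $H^1(F)$, accepting the $\rho_F$-dependent face estimates rather than routing through $K$.

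Separately, your closing paragraph names the $\rho_F$-power bookkeeping as ``the delicate step'' but does not carry it out, so as written the proposal does not arrive at the claimed bound $\rho_F^{k-1}h_F^{k}|u|_{k+1,K}$. The paper performs no anisotropy-aware short-edge analysis of the kind you sketch; its mechanism is simply that the $\rho_F^{-1}$ factors from the trace and inverse inequalities on $F$ are paid for by the prefactor $\epsilon_F\propto\rho_F$, while the $\rho_F$-dependence of the face interpolation estimates is carried into the final constant. Since producing the correct power of $\rho_F$ is the substance of the lemma, leaving it as an acknowledged difficulty means the proof is not complete.
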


\begin{proof}

By the Theorem \ref{Trace} and triangle inequality, under the star-shaped 
condition \ref{def:star-shape},
\begin{equation}
\begin{aligned}
 \|u_I-Q_F u_I\|_{0,e} & \lesssim \epsilon_F^{-1} \big(h_F^{-1/2}\|u_I-Q_F 
 u_I\|_{0,F}+h_F^{1/2}|u_I-Q_F u_I|_{1,F}\big)
\\
& \lesssim \epsilon_F^{-1}\big( h_F^{-1/2} \|u_I-u\|_{0,F}+
h_F^{-1/2} \|u-Q_F u_I\|_{0,F}
\\
& \quad +h_F^{1/2}|u_I-u|_{1,F}+h_F^{1/2}|u-Q_F u_I|_{1,F}\big)
\end{aligned}
\end{equation}
where each except the last term has optimal error order by Theorem 
\ref{theorem:interpolation}. In order to use Theorem 
\ref{theorem:interpolation} on the face, $\rho_F$ need to be included because we do 
not assume it is uniformly bounded below, therefore the constant $\epsilon_F\propto 
\rho_F$ is introduced. For the $|\Pi_F u_I-Q_F u_I|_{1,F}$ term, we apply the 
inverse inequality on the face $F$ and the triangle inequality.

\begin{equation}
\begin{aligned}
 |u-Q_F u_I|_{1,F}  & \lesssim |u-\Pi_F u_I|_{1,F}+|\Pi_F u_I-Q_F u_I|_{1,F}
\\
& \lesssim |u-\Pi_F u_I|_{1,F}+\epsilon_F^{-1} h_F^{-1}\|\Pi_F u_I-Q_F u_I\|_{0,F}
\\
& \leq |u-\Pi_F u_I|_{1,F}+\epsilon_F^{-1} h_F^{-1}\|\Pi_F u_I- 
u_I\|_{0,F}\\
&\quad  + \epsilon_F^{-1} h_F^{-1}\|u_I-Q_F u_I\|_{0,F}
\end{aligned}
\end{equation}
where each term has optimal error order by Theorem \ref{theorem:interpolation}. 
Similarly 
the inverse inequality depends on $\rho_F$ 
\cite{Brenner;Sung:2018Virtual},
a mesh dependent constant $\epsilon_F\propto \rho_F$ is introduced to 
compensate.\end{proof}

Now we turn to derive the estimates of the other terms in the a priori error bound 
\eqref{corollary:apriori}.

\begin{lemma} [The projection type error estimates]
\label{lemma:projection}
Under the same setting with Lemma \ref{lemma:stab-face}, we have
\begin{equation}
\|\nabla \Pi_K(u-u_I)\|_{0,K}\lesssim h_K^{k}|u|_{k+1,K},
\end{equation}
and
\begin{equation}
h_K^{1/2}\|\nabla (u-\Pi_K u)\|_{0,F}\lesssim h_K^{k}|u|_{k+1,K}.
\end{equation}
\end{lemma}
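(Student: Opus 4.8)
The plan is to treat the two inequalities separately, since the first is a purely volumetric estimate on $K$ while the second is a trace estimate onto a face.

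For the first inequality, I would exploit that $\Pi_K$ is the orthogonal projection in the gradient inner product (Definition \ref{def:proj-H1}), which makes it stable in the $H^1$-seminorm: for any $w\in H^1(K)$, the field $\nabla\Pi_K w$ is the $L^2(K)$-orthogonal projection of $\nabla w$ onto $\nabla\mathbb{P}_k(K)$, so testing the defining orthogonality against $p=\Pi_K w$ gives $|\Pi_K w|_{1,K}\le |w|_{1,K}$. Setting $w=u-u_I$ and using linearity of $\Pi_K$, this yields $\|\nabla\Pi_K(u-u_I)\|_{0,K}=|\Pi_K(u-u_I)|_{1,K}\le |u-u_I|_{1,K}$. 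I would then apply the interpolation error estimate (Theorem \ref{theorem:interpolation}) with $l=k$ to get $|u-u_I|_{1,K}\lesssim [C(\rho_K)h_K]^{k}|u|_{k+1,K}$, and invoke Assumption \ref{assumptions:geometry}(2), under which $\rho_K$ is bounded below so that $C(\rho_K)$ is absorbed into the hidden constant, giving the claimed bound.

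For the second inequality, the idea is to push the face norm into $K$ via the scaled trace inequality. Since $u\in H^{k+1}(K)$ with $k\ge 1$, we have $u-\Pi_K u\in H^2(K)$, so $\nabla(u-\Pi_K u)\in (H^1(K))^3$ and Lemma \ref{Trace} applies componentwise on the whole boundary; restricting to the single face $F\subset\partial K$ gives $\|\nabla(u-\Pi_K u)\|^2_{0,F}\lesssim h_K^{-1}|u-\Pi_K u|^2_{1,K}+h_K|u-\Pi_K u|^2_{2,K}$. I would control the two seminorms by the projection error estimate (Theorem \ref{Projection}): the case $m=1,\,l=k$ gives $|u-\Pi_K u|_{1,K}\lesssim [C(\rho_K)h_K]^{k}|u|_{k+1,K}$, and the case $m=2,\,l=k$ gives $|u-\Pi_K u|_{2,K}\lesssim [C(\rho_K)h_K]^{k-1}|u|_{k+1,K}$. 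Substituting, both terms scale like $h_K^{2k-1}|u|^2_{k+1,K}$, so $\|\nabla(u-\Pi_K u)\|_{0,F}\lesssim h_K^{k-1/2}|u|_{k+1,K}$, and multiplying by $h_K^{1/2}$ finishes the proof.

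The calculations are routine; the one point I would verify with care is the geometric dependence, which is the whole purpose of the lemma. Both estimates invoke \emph{only} the three-dimensional chunkiness parameter $\rho_K$: crucially, the trace inequality is applied on the element $K$ itself rather than as a face-to-edge trace, so the possibly degenerate face parameter $\rho_F$ never appears. Together with Assumption \ref{assumptions:geometry}(2), this is exactly what keeps the hidden constants independent of $\rho_F$. I would therefore make sure every application of Theorems \ref{Projection} and \ref{theorem:interpolation} is performed on $K$, where $\rho_K$ is controlled, and never on a face $F$.
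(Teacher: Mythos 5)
Your proposal is correct and follows essentially the same route as the paper: the first bound via the $H^1$-seminorm stability of $\Pi_K$ combined with the interpolation estimate of Theorem \ref{theorem:interpolation}, and the second via the scaled trace inequality on $K$ (Lemma \ref{Trace}) together with the $m=1$ and $m=2$ cases of Theorem \ref{Projection}. Your added remark that all estimates are performed on $K$ so that only $\rho_K$ (bounded below by Assumption \ref{assumptions:geometry}) enters the constants is exactly the point the paper intends.
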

\begin{proof}
By Theorem \ref{Projection} and $\Pi_K$ is the projection under 
$|\cdot|_{1,K}$,
$$ \|\nabla \Pi_K(u-u_I)\|_{0,K}\lesssim  |u-u_I|_{1,K}\lesssim h_K^{k}|u|_{k+1,K}.$$
In addition, by Theorems \ref{Trace} and \ref{Projection}
$$h_K^{1/2}\|\nabla (\Pi_Ku-u)\|_{0,F}\lesssim 
|\Pi_Ku-u|_{1,K}+h_K|\Pi_Ku-u|_{2,K}\lesssim h_K^{k}|u|_{k+1,K}.$$
\end{proof}

\begin{theorem}[Energy norm error estimate]
\label{Triple estimate}
If \eqref{eq:weakform} has a sufficiently regular solution $u \in H^{k+1}(K)$ 
($k\geq 1$), let $u_h$ be the VEM approximation in \eqref{eq:VEM problem}, and $u_I$ 
be the interpolation in Definition \ref{def:interpolation}, then under the geometric 
assumptions \ref{assumptions:geometry}, we have the following estimate 
\begin{equation}
\label{eq:est-energy}
\tnorm{u_h-u_I} \lesssim h^{k}|u|_{k+1} .
\end{equation}
\end{theorem}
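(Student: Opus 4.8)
The plan is to obtain the estimate directly from the a priori bound of Corollary \ref{corollary:apriori}, which has already reduced the global energy error to a sum over elements of four local quantities. Starting from \eqref{eq:apriori}, I would treat each of the four families of terms separately and show that every one of them is controlled, after squaring, by $h_K^{2k}|u|^2_{k+1,K}$. Concretely, the volumetric projection term $\|\nabla\Pi_K(u-u_I)\|_{0,K}$ and the face projection term $h_K^{1/2}\|\nabla(\Pi_K u-u)\cdot n\|_{0,F}$ are handled by Lemma \ref{lemma:projection}; the face stabilization term $h_K^{-1/2}\|Q_K u_I-Q_F u_I\|_{0,F}$ is handled by Lemma \ref{lemma:stab-face}; and the edge stabilization term is handled by Lemma \ref{stab order edge}. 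Each of these three lemmas already supplies the optimal local order $h_K^{k}|u|_{k+1,K}$, so no new estimate is needed beyond invoking them.

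The one point requiring care — and the real content of the theorem — is the edge contribution, since this is where the weaker energy norm is supposed to buy robustness. Lemma \ref{stab order edge} bounds the edge term by $\rho_F^{k-1}h_F^{k}|u|_{k+1,K}$, so its square carries a factor $\rho_F^{2(k-1)}$. Here I would use that the chunkiness parameter of any face is bounded above by a fixed geometric constant: since $F$ is star-shaped with respect to a ball of radius $\rho_F h_F$ inside a set of diameter $h_F$, one has $\rho_F\le 1/2$, whence $\rho_F^{k-1}\le 1$ for every $k\ge 1$. Combined with $h_F\le h_K$, this absorbs all dependence on $\rho_F$ and leaves the clean bound $h_K^{2k}|u|^2_{k+1,K}$. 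This is precisely the mechanism that permits the anisotropic faces of Figure \ref{fig:anisoface}: the $|\ln\rho_F|$-type factors that spoil the $H^1$-seminorm estimate never appear, and the final constant depends only on $\rho_K$.

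Next I would aggregate over the mesh. Assumption \ref{assumptions:geometry}(1) gives a uniform upper bound on the number of faces per element, and hence on the number of edges appearing in the innermost sums of \eqref{eq:apriori}, so that the finite sums over $F\subset\partial K$ and $e\subset\partial F$ cost only a fixed constant. Assumption \ref{assumptions:geometry}(2), that $\rho_K$ is bounded below, guarantees that the constants $C(\rho_K)$ entering the Bramble--Hilbert, trace, projection, and interpolation estimates (Lemma \ref{BH lemma star}, Lemma \ref{Trace}, Theorem \ref{Projection}, Theorem \ref{theorem:interpolation}), which feed into the three local lemmas above, are uniform across all elements. Collecting these, \eqref{eq:apriori} becomes
\begin{equation*}
\tnorm{u_h-u_I}^2 \;\lesssim\; \sum_{K\in\mathcal{T}_h} h_K^{2k}\,|u|^2_{k+1,K}.
\end{equation*}

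Finally I would factor out $h=\max_K h_K$ to write $\sum_K h_K^{2k}|u|^2_{k+1,K}\le h^{2k}\sum_K|u|^2_{k+1,K}=h^{2k}|u|^2_{k+1}$, and take a square root to reach $\tnorm{u_h-u_I}\lesssim h^{k}|u|_{k+1}$. No individual step is technically deep once the three preceding local lemmas are available; the main obstacle is bookkeeping, namely verifying that every constant in the final bound depends only on $\rho_K$ and the uniformly bounded face/edge counts, and never on $\rho_F$. The delicate case is $k=1$, where $\rho_F^{k-1}=\rho_F^{0}=1$ leaves no decay to spare, so the argument must rest solely on the uniform upper bound $\rho_F\le 1/2$ rather than on any smallness of $\rho_F$.
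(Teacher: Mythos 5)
Your proposal is correct and follows essentially the same route as the paper, whose proof is a one-line invocation of Corollary \ref{corollary:apriori} together with Lemmas \ref{lemma:projection}, \ref{lemma:stab-face}, and \ref{stab order edge}. The additional bookkeeping you supply --- in particular the observation that $\rho_F \le 1/2$ by Definition \ref{def:star-shape}, which absorbs the $\rho_F^{k-1}$ factor from Lemma \ref{stab order edge} uniformly in $\rho_F$ --- is a legitimate and useful filling-in of details the paper leaves implicit.
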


\begin{proof}
\eqref{eq:est-energy} follows immediately from the bound of each term in the a 
priori error estimate \eqref{eq:apriori} by Lemmas \ref{lemma:projection}, 
\ref{lemma:stab-face}, and \ref{stab order edge}.
\end{proof}

\section*{Acknowledgement}
This work was supported in part by the National Science Foundation
under grant DMS-1418934.

\section*{References}


\end{document}